\def\marker{\>\hbox{${\vcenter{\vbox{
    \hrule height 0.4pt\hbox{\vrule width 0.4pt height 5pt
    \kern6pt\vrule width 0.4pt}\hrule height 0.4pt}}}$}\>}
\newtheorem {Theorem}  {Theorem}[section]
\newtheorem {LEM}[Theorem]{Lemma}
\newtheorem {COR}[Theorem]{Corollary}
\theoremstyle{definition}
\newcommand{\D}{\Delta}
\newcommand{\phiv}{\varphi}
\newcommand{\CC}{\mathcal{C}}
\newcommand{\pbar}{\overline{\varphi}}
\DeclareMathOperator{\diam}{diam}
\begin{document}
\title{\bf Precoloring extension of Vizing's Theorem for multigraphs}
\vspace{6mm}
\author{Yan Cao \thanks{yacao@mail.wvu.edu. Department of Mathematics, West Virginia University, Morgantown, WV 26506, USA.}\,\,, Guantao Chen \thanks{gchen@gsu.edu. Department of Mathematics and Statistics, Georgia State University, Atlanta, GA 30303, USA. Partially supported by NSF grant DMS-1855716.}\,\,, Guangming Jing \thanks{gjing@augusta.edu. Department of Mathematics, Augusta University, Augusta, GA 30912, USA. Partially supported by NSF grant DMS-2001130.}\,\,, Xuli Qi \thanks{ qixuli-1212@163.com. Department of Mathematics and Statistics, Georgia State University, Atlanta, GA 30303, USA. Partially supported by NSFC grants 11801135, 11871239 and 11771172.} \,\,, Songling Shan \thanks{sshan12@ilstu.edu. Department of Mathematics, Illinois State University, Normal, IL 61790, USA.}
}
\date{}
\maketitle

\begin{abstract}
Let $G$ be a graph with maximum degree $\Delta(G)$ and maximum multiplicity $\mu(G)$. Vizing and Gupta, independently, proved in the 1960s that the chromatic index of $G$ is at most $\Delta(G)+\mu(G)$.
The distance between two edges $e$ and $f$ in $G$ is the length of a shortest path connecting an endvertex of $e$ and an endvertex of $f$.  A {\em distance-$t$ matching} is a set of edges having pairwise distance at least $t$.
Edwards et al. proposed the following conjecture:  For any graph $G$, using the palette $\{1, \dots, \D(G)+\mu(G)\}$,
any precoloring on a distance-$2$ matching can be extended to a proper edge coloring of $G$.
Gir\~{a}o and Kang verified this conjecture for distance-$9$ matchings. In this paper, we improve the required distance from $9$ to $3$ for multigraphs $G$ with $\mu(G) \ge 2$. 	
	
 \vskip .15in
 \par {\small {\it Keywords: } Edge coloring;  Precoloring extension;  Vizing's Theorem;   Multi-fan}
\end{abstract}

\section{Introduction}
In this paper,  we  follow the book~\cite{SSTF} of
Stiebitz et al. for notation and terminologies. Graphs in this paper are finite, undirected, and without loops, but may have multiple edges.
Let $G=(V(G),E(G))$ be a graph, where $V(G)$ and $E(G)$ are respectively the vertex set and the edge set of  $G$.
Let $\D(G)$ and $\mu(G)$ be respectively the maximum degree and the maximum multiplicity of $G$.
Let $[k] :=\{1,  \dots, k\}$ be a palette of $k$ available colors.
A {\it $k$-edge-coloring} of $G$ is a map  that assigns to every edge of $G$ a color from the palette $[k]$  such that  no two adjacent edges receive the same color (the  edge coloring is also called {\it proper}).  Denote by $\CC^k(G)$ the set of all  $k$-edge-colorings of $G$. The {\it chromatic index\/} $\chi'(G)$ is  the least integer $k$ such that $\CC^k(G)\ne\emptyset$.
The distance between two edges $e$ and $f$ in $G$ is the length of a shortest path connecting  an endvertex of $e$ and an endvertex of $f$.
A {\it distance-$t$ matching} is a set of edges having pairwise distance at least $t$.  Following this definition, a matching is a distance-$1$ matching and an induced matching is a distance-$2$ matching. For a matching $M$, we use $V(M)$ to denote the set of vertices saturated by $M$.

In the 1960s, Vizing~\cite{Vizing-2-classes} and, independently,  Gupta~\cite{Gupta-67} proved that $\D(G)\le \chi'(G) \le \D(G)+\mu(G)$, which is commonly called Vizing's Theorem.
Using the palette $[\D(G)+\mu(G)]$, when can we extend a precoloring on a given edge set $F\subseteq E(G)$ to
a proper edge coloring of $G$?
Albertson and Moore~\cite{Albertson2001} conjectured that if $G$ is a simple graph,
using the palette $[\D(G)+1]$, any precoloring on a distance-$3$ matching can be extended to a proper edge coloring of $G$.  Edwards et al.~\cite{Edwards2018} proposed a stronger conjecture: {\it For any graph $G$, using the palette $[\D(G)+\mu(G)]$,
any precoloring on a distance-$2$ matching can be extended to a proper edge coloring of $G$.}
Gir\~{a}o and Kang~\cite{Kang2019} verified the conjecture of Edwards et al. for distance-$9$ matchings. In this paper, we improve the required distance from $9$ to $3$ for multigraphs with the maximum multiplicity at least $2$ as follows.

\begin{Theorem} \label{thm:main}
Let $G$ be a multigraph and $M$ be a distance-$3$ matching of $G$. If $\mu(G)\ge 2$ and $M$ is arbitrarily precolored from the palette $[\D(G) +\mu(G)]$, then there is a proper edge coloring of $G$ using colors from $[\D(G) +\mu(G)]$ that agrees with the precoloring on $M$.
\end{Theorem}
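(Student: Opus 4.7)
The plan is to argue by contradiction, examining a counterexample $(G, M, \phi)$ with $|E(G)|$ minimum, and writing $k := \Delta(G)+\mu(G)$. Since $\mu(G)\ge 2$ forces $G$ to contain parallel edges, which share endpoints and thus cannot both lie in the matching $M$, there is at least one edge $e = xy \in E(G)\setminus M$. I would choose such an $e$ with $\mu(G-e) \ge 2$ (handling the narrow exceptional case of $\mu(G)=2$ with a single multiple pair by a direct ad hoc argument on the constrained structure). By the minimality of $|E(G)|$, the precoloring $\phi$ extends to a proper $k$-edge coloring $\phi'$ of $G-e$, and the whole problem reduces to inserting a color for $e$.

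If some color is missing at both $x$ and $y$ in $\phi'$, assigning it to $e$ immediately contradicts the counterexample. Otherwise, I would run the classical multi-fan argument at $x$: build a maximal sequence $(y_0 = y, y_1, \dots, y_p)$ of distinct neighbors of $x$ in which each $\phi'(x y_i)$, for $i\ge 1$, is missing at some earlier $y_j$. The distance-$3$ hypothesis is the crucial geometric input: since $M$-edges lie at pairwise distance at least three, at most one $M$-edge has an endpoint in the closed neighborhood $N[x]$, and in fact the closed neighborhoods $N[u]\cup N[v]$ of distinct $M$-edges $uv$ are pairwise vertex-disjoint. Hence at most one precolored $M$-edge, call it $f_x$ with color $c_x$, can interact with the fan at $x$, and its color is the only immediately forbidden color in this local analysis.

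Because $\phi'$ has at least $\mu(G)+1 \ge 3$ missing colors at $x$ and at least $\mu(G) \ge 2$ missing colors at each $y_i$, there is genuine flexibility in choosing the Kempe swap colors. The Vizing procedure terminates either in a color shift along the fan that colors $e$, or in a Kempe swap in two colors $\alpha \in \overline{\phi'}(x)$ and $\beta \in \overline{\phi'}(y_j)$. The key obstacle---and the hardest part of the proof---is guaranteeing that the Kempe swap does not alter the color of any precolored $M$-edge. My plan is to run a careful case analysis on the $(\alpha,\beta)$-chain starting from $y_j$: if it enters an $M$-region $N[u]\cup N[v]$ of some $uv\in M$, then the chain's trajectory inside that vertex-disjoint region is tightly controlled by distance-$3$, and either (i) $\{\alpha,\beta\}$ can be re-chosen from the abundant missing colors at $x$ and $y_j$ to avoid $\phi(uv)$ in the threatened region---here $\mu\ge 2$ is essential, since it provides the extra alternatives beyond what simple-graph Vizing would afford---or (ii) the forced alternating structure inside the region contradicts the minimality of $|E(G)|$ via a smaller counterexample obtained by local surgery (for instance by deleting an edge of $G-(M\cup\{e\})$ inside the $M$-region). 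Either outcome defeats the minimum counterexample, completing the argument.
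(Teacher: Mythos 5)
Your proposal diverges substantially from the paper's, and it contains a genuine gap at its core. The paper never colors $G-e$ with $M$ kept inside the colored graph; instead it colors $G-M$ (and then $G-(M\cup M^*)$ for an auxiliary matching $M^*$), precisely so that Kempe chains and fan recolorings can never touch a precolored edge. Your plan, by contrast, colors $G-e$ with the $M$-edges inside the coloring and then runs the Vizing multi-fan argument at $x$. The obstacle you flag --- making sure the Kempe swap ``does not alter the color of any precolored $M$-edge'' --- is not something the distance-$3$ hypothesis lets you control. A single $(\alpha,\beta)$-chain is a global object: it can leave the neighborhood of $x$ and $y_j$, wander across the graph, and pass through arbitrarily many vertex-disjoint $M$-regions, picking up any $M$-edge whose fixed precolor is $\alpha$ or $\beta$. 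Having $\mu+1\geq 3$ missing colors at $x$ (and $\mu\geq 2$ at $y_j$) gives you only a constant amount of freedom in $\{\alpha,\beta\}$, while the chain can threaten an unbounded collection of precolored edges with distinct fixed colors, so option~(i) ``re-choose $\{\alpha,\beta\}$'' does not go through. Option~(ii) is unsubstantiated: deleting some edge of $G-(M\cup\{e\})$ and appealing to minimality reproduces exactly the same edge-insertion problem when you restore the deleted edge, so no contradiction is obtained. There is also a secondary issue in the minimality setup: applying the theorem to $G-e$ yields an extension from the palette $[\Delta(G-e)+\mu(G-e)]$, and if removing $e$ drops $\Delta+\mu$ then the precoloring on $M$ (which may use the color $\Delta(G)+\mu(G)$) need not live inside that smaller palette, so the inductive hypothesis does not apply directly; you would need to choose $e$ so that $\Delta+\mu$ is preserved, and argue that this is always possible, which is fiddly but perhaps fixable.

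The paper's route shows what extra machinery is actually needed to make the distance-$3$ hypothesis bite. After coloring $G-M$ with $k+1=\Delta+\mu$ colors (the easy case being $\chi'(G-M)\le k$, where one simply moves conflicting edges to the spare color $\Delta+\mu$), the hard case $\chi'(G-M)=k+1$ is handled by Lemma~\ref{lem:BF-gen+}: one finds a matching $M^*\subseteq G-V(M)$ of fully saturated $k$-critical edges so that $\chi'(G-(M\cup M^*))=k$. The Goldberg--Seymour Theorem (Lemma~\ref{G-S-C}) then supplies, around each such critical edge, a maximal $k$-dense subgraph $H_e$ which is elementary, strongly closed, and --- crucially --- of diameter at most~$2$ (Lemma~\ref{lem:elementarty-k-dense}(c)). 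It is this diameter bound, combined with distance~$3$, that guarantees each dense subgraph sees at most one $M$-edge, and all the delicate multi-fan and Kempe-chain surgery (Cases 1--3 in the proof) is confined within a single $H_e$ while the boundary $\partial(H_e)$ is controlled via the strongly-closed property. Your proposal has the right intuition that distance~$3$ should localize the problem, but the localization is delivered by the dense-subgraph structure, not by the raw neighborhoods $N[u]\cup N[v]$, and without the detour through $G-M$ and $G-(M\cup M^*)$ the Kempe chains remain uncontrolled. If you want to pursue your direction, the first thing to settle is a concrete mechanism preventing the Vizing Kempe chain from traversing an $M$-edge colored $\alpha$ or $\beta$; as written, no such mechanism is present.
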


The {\em density} of a graph $G$, denoted $\Gamma(G)$,
is defined as $$\Gamma(G)=max\left\{\frac{2|E(H)|}{|V(H)|-1}:H\subseteq G,|V(H)|\ge3 \,\,  \text{and} \,\, |V(H)| \,\, \text{is odd} \right\} $$
if $|V(G)| \ge 3$ and $\Gamma(G) =0$ otherwise.
Note that for any $X\subseteq V(G)$ with odd $|X|\ge3$, we have $\chi'(G[X])\ge \frac{2|E(G[X])|}{|X|-1}$, where $G[X]$ is the subgraph of $G$ induced by $X$. Therefore, $\chi'(G) \ge \lceil \Gamma(G) \rceil$.
So, besides the maximum degree, the density provides another lower bound on the chromatic index of a graph.  In the 1970s, Goldberg~\cite{Goldberg1973} and Seymour~\cite{Seymour1979} independently conjectured that actually $\chi'(G) =\lceil \Gamma(G) \rceil$
provided  $\chi'(G) \ge \Delta(G)+2$.  The conjecture was commonly referred to as one of the most challenging problems in graph chromatic theory \cite{SSTF}. In joint work with  Zang, two authors of this paper, Chen and Jing gave a proof of the Goldberg-Seymour Conjecture recently \cite{CJZ19+}. We assume that the Goldberg-Seymour Conjecture is true in this paper.

We will prove Theorem~\ref{thm:main} in Section $4$. In Sections $2$ we introduce some new structural properties of dense subgraphs. In Section $3$  we define a general multi-fan and obtain some generalizations of Vizing's Theorem.

\section{Dense subgraphs}

Throughout the rest of this paper, we reserve the notation $\Delta$ and $\mu$
for  the maximum degree and the maximum multiplicity of the graph $G$, respectively. For $u\in V(G)$, let $d_G(u)$ denote the {\em degree} of  $u$ in $G$.
For a vertex set $N\subseteq V(G)$, let $G-N$ be the graph obtained from $G$ by deleting all the vertices in $N$ and edges incident with them. For an edge set $F\subseteq E(G)$, let
$G-F$ be the graph obtained from $G$ by deleting all the edges in $F$ but keeping their endvertices.
If $F=\{e\}$, we simply write $G-e$.
Similarly, we let $G+e$  be the graph obtained from $G$ by
adding the edge $e$ to $E(G)$.
For disjoint $X,Y\subseteq V(G)$,
$E_G(X,Y)$ is the set of edges of $G$ with one endvertex in $X$
and the other in $Y$.
If $X=\{x\}$, we simply write
$E_G(x,Y)$.
For  $X\subseteq V(G)$, the edge set $\partial_G(X) := E_G(X, V(G)\backslash X)$ is called the  {\it boundary} of $X$ in $G$.  For a subgraph $H$ of $G$, we simply write $\partial_G(H)$ for $\partial_G(V(H))$.

Let $G$ be a graph, $v\in V(G)$ and  $\varphi \in \CC^{k}(G)$ for some positive integer $k$. We define
$
\varphi(v)=\{\varphi(f):\text{$f \in E(G)$ and $f$ is incident with $v$}\} $ and $\pbar(v)=[k] \setminus\varphi(v).
$
We call $\varphi(v)$ the set of colors {\it present} at $v$ and $\pbar(v)$
the set of colors {\it missing} at $v$.
For a vertex set $X\subseteq V(G)$,  define  $\pbar(X)=\bigcup _{v\in X} \pbar(v)$.   A vertex set $X\subseteq V(G)$ is called
 {\it $\varphi$-elementary} if $\pbar(u)\cap \pbar(v)=\emptyset$
for every two distinct vertices $u,v\in X$.
The set $X$ is called  {\it $\varphi$-closed} if
each color on  edges from $\partial_G(X)$ is present at each vertex of $X$.
Moreover, the set $X$ is called  {\it strongly $\phiv$-closed}  if $X$ is $\phiv$-closed and colors on  edges from $\partial_G(X)$ are pairwise distinct.
For a subgraph $H$ of $G$, let $\phiv_H$ or $(\phiv)_H$ be the edge coloring of $G$ restricted on $H$.
We say a subgraph $H$ of $G$ is
$\phiv$-elementary, $\phiv$-closed and strongly $\phiv$-closed, if $V(H)$ is $\phiv$-elementary, $\phiv$-closed and strongly $\phiv$-closed, respectively.
Clearly, if $H$ is $\phiv_H$-elementary then  $H$ is $\phiv$-elementary, but the converse is not true as the edges in $\partial_G(H)$ are removed when we consider $\phiv_H$.

A subgraph $H$ of $G$ is  {\em $k$-dense }  if $|V(H)|$ is odd and $|E(H)|=(|V(H)|-1)k/2$.
Moreover, $H$ is a {\em maximal $k$-dense subgraph} if there does not exist a $k$-dense subgraph $H'$ containing $H$ as a proper subgraph.

\begin{LEM} {\em \cite{CCJ2021}} \label{MDS}
Given a graph $G$,  if $\chi'(G)=k \geq \Delta(G)+1$, then distinct maximal $k$-dense subgraphs of $G$ are pairwise vertex-disjoint.
\end{LEM}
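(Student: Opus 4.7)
The plan is to argue by contradiction. Suppose $H_1$ and $H_2$ are distinct maximal $k$-dense subgraphs of $G$ with $S:=V(H_1)\cap V(H_2)\ne\emptyset$. Since replacing $H_i$ by $G[V(H_i)]$ cannot decrease its edge count and cannot exceed the density upper bound $k(|V(H_i)|-1)/2$, I may assume each $H_i$ is induced. Let $A=V(H_1)\setminus V(H_2)$ and $B=V(H_2)\setminus V(H_1)$; maximality forces both to be nonempty, since otherwise one subgraph would be a proper $k$-dense subgraph of the other. Write $|V(H_i)|=2n_i+1$ and $s=|S|$. By inclusion-exclusion,
\[
|E(G[V(H_1)\cup V(H_2)])|+|E(G[S])|=|E(H_1)|+|E(H_2)|+|E_G(A,B)|.
\]

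\textbf{Case 1: $s$ is odd.} Then $|V(H_1)\cup V(H_2)|$ is odd too, and the density bound gives $|E(G[V(H_1)\cup V(H_2)])|\le k(|V(H_1)\cup V(H_2)|-1)/2$ and $|E(G[S])|\le k(s-1)/2$ (the latter trivially for $s=1$). A brief computation shows these two upper bounds sum to exactly $|E(H_1)|+|E(H_2)|=k(n_1+n_2)$, so the displayed identity forces $|E_G(A,B)|=0$ and makes $G[V(H_1)\cup V(H_2)]$ itself $k$-dense. Since $V(H_2)\not\subseteq V(H_1)$ (else $H_2\subsetneq H_1$ would contradict maximality of $H_2$), this subgraph properly contains $H_1$, contradicting maximality of $H_1$.

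\textbf{Case 2: $s$ is even, so $s\ge 2$ and $|A|,|B|$ are odd.} Apply the same edge-classification inside $H_1$ alone: $|E(H_1)|-|E(G[A])|=|E(G[S])|+|E_G(S,A)|$. Combining with $|E(H_1)|=kn_1$ and $|A|-1=2n_1-s$ yields
\[
2|E(G[A])|-k(|A|-1)\;=\;ks-2|E(G[S])|-2|E_G(S,A)|.
\]
The left side is nonpositive (trivially for $|A|=1$, by the density bound for odd $|A|\ge 3$), so $|E(G[S])|+|E_G(S,A)|\ge ks/2$, and symmetrically $|E(G[S])|+|E_G(S,B)|\ge ks/2$. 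The sum of these two inequalities reads $\sum_{v\in S}d_{G[V(H_1)\cup V(H_2)]}(v)\ge ks$, but the hypothesis $k\ge\Delta+1$ forces this same sum to be at most $s(k-1)$, a contradiction.

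The main obstacle is Case 2: because $|V(H_1)\cup V(H_2)|$ is even, the union itself can never be $k$-dense, so the clean Case 1 argument fails. The workaround I plan is to peel $S$ off each $H_i$ separately and use the density bound on the odd residuals $A$ and $B$; the hypothesis $\Delta\le k-1$ is then precisely what is needed to close the double-counting of the degrees of the vertices of $S$ inside $G[V(H_1)\cup V(H_2)]$.
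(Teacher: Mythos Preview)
Your argument is correct. The paper does not supply its own proof of this lemma; it is quoted verbatim from \cite{CCJ2021}, so there is no in-paper argument to compare against. Your approach---reducing to induced subgraphs via the density upper bound $\Gamma(G)\le\chi'(G)=k$, then splitting on the parity of $|S|$---is the natural elementary route: in the odd case the union is itself $k$-dense and contradicts maximality, and in the even case the double count $\sum_{v\in S}d_{G[V(H_1)\cup V(H_2)]}(v)\ge ks$ collides with the degree cap $\Delta\le k-1$. Both cases are handled cleanly, including the degenerate situations $s=1$ and $|A|=1$.
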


\begin{LEM}\label{k-dense subgraph}
Let $G$ be a graph with  $\chi'(G) = k$ and $H$ be a $k$-dense subgraph of $G$. Then $H$ is an induced subgraph of $G$ with $\chi'(H)=\Gamma(H) = k$. Furthermore, for any coloring $\varphi\in \CC^{k}(G)$, $H$ is $\varphi_H$-elementary and strongly $\varphi$-closed.
\end{LEM}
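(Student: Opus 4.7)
The plan is to extract everything from the double counting forced by the definition of $k$-dense: $|V(H)|$ is odd and $|E(H)|=(|V(H)|-1)k/2$, combined with the fact that each color class in any proper $k$-edge-coloring is a matching.

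First I would establish $\chi'(H)=\Gamma(H)=k$ and the induced-subgraph claim. The lower bound $\Gamma(H)\ge 2|E(H)|/(|V(H)|-1)=k$ is immediate from the density formula, and $\chi'(H)\le \chi'(G)=k$ holds because $H\subseteq G$, while $\chi'(H)\ge \lceil\Gamma(H)\rceil\ge k$, so equality holds throughout. For the induced claim, if some $e\in E(G)\setminus E(H)$ had both endpoints in $V(H)$, then $H+e\subseteq G$ would have density strictly greater than $k$, forcing $\chi'(G)\ge k+1$, a contradiction.

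Next, for any $\varphi\in\CC^k(G)$ I would restrict to $H$ and count color classes. Since $|V(H)|$ is odd, every color class in $H$ is a matching of size at most $(|V(H)|-1)/2$. Summing over the $k$ colors gives an upper bound of $k(|V(H)|-1)/2=|E(H)|$, and since equality is attained, every color class in $H$ must be a near-perfect matching saturating all but exactly one vertex. Thus each color of $[k]$ is missed at precisely one vertex of $H$ under $\varphi_H$, which immediately gives $\varphi_H$-elementary (hence $\varphi$-elementary, as missing sets with respect to $\varphi$ are contained in missing sets with respect to $\varphi_H$).

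Finally I would deduce strong $\varphi$-closedness. Let $v_c$ denote the unique vertex of $H$ missing color $c$ in $H$. For any edge $e=uv'\in\partial_G(H)$ with $u\in V(H)$ and $c:=\varphi(e)$, if $u\ne v_c$ then $u$ is already incident with a $c$-colored edge inside $H$, contradicting properness; hence $u=v_c$. Properness also forces at most one boundary edge at $v_c$ to carry color $c$, so distinct edges of $\partial_G(H)$ carry distinct colors (strongly closed). Moreover, if color $c$ appears on some boundary edge, then that edge gives $c\in\varphi(v_c)$, while for every other $w\in V(H)$ we already have $c\in\varphi(w)$ from the near-perfect matching in $H$; so $c$ is present at every vertex of $H$, giving $\varphi$-closed.

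I do not expect any serious obstacle; the main point is simply recognizing that the density equality $|E(H)|=(|V(H)|-1)k/2$ saturates the matching bound for every color simultaneously, which pins down the missing-color structure rigidly and then forces the behavior of $\partial_G(H)$ by properness alone.
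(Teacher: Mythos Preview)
Your proposal is correct and follows essentially the same approach as the paper: both derive $\chi'(H)=\Gamma(H)=k$ from the chain $k\le\Gamma(H)\le\chi'(H)\le\chi'(G)=k$, rule out extra edges via the density bound, and then use the saturated matching count (each of the $k$ color classes in $H$ is a near-perfect matching of size $(|V(H)|-1)/2$) to conclude $\varphi_H$-elementarity and strong $\varphi$-closedness. Your write-up merely unpacks the final ``consequently'' step in the paper more explicitly.
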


\begin{proof}
Since $H$ is $k$-dense, by the definition,
$|E(H)|=\frac{|V(H)|-1}{2} k$. Thus $k\le \Gamma(H)\le\chi'(H)\le \chi'(G)=k$ implying $\chi'(H)=\Gamma(H)=k$. Thus $H$ is an induced subgraph of $G$, since otherwise there exists a subgraph $H'$ of $G$ with $V(H')=V(H)$ such that $\chi'(H')\ge\Gamma(H')>k$, a contradiction to $\chi'(H')\le \chi'(G)=k$.
Since $H$ has an odd order, the size of a maximum matching in $H$
has size at most $(|V(H)|-1)/2$. Therefore, under any $k$-edge-coloring $\varphi$ of $G$,
each color class in $H$ is a matching of size exactly $(|V(H)|-1)/2$.
Thus every color in $[k]$ is missing at exactly one vertex of $H$ or it appears exactly once in $\partial_G(H)$.
Consequently, $H$ is $\varphi_H$-elementary and strongly $\varphi$-closed.
\end{proof}

An edge $e$ of a graph $G$ is called a {\it k-critical edge}  if $k=\chi'(G-e) < \chi'(G)=k+1.$
A graph G is called {\it k-critical} if $\chi'(H)<\chi'(G)=k+1$ for each proper subgraph $H$ of $G$.
It is easy to see that a connected graph $G$ is $k$-critical if and only if every edge of $G$ is $k$-critical.
For $e\in E(G)$, let $V(e)$ denote the set of  the two endvertices of $e$. The following lemma is a consequent of the Goldberg-Seymour Conjecture.

\begin{LEM}\label{G-S-C}
	Let $G$ be a multigraph and $e\in E(G)$.  If $e$ is  a $k$-critical edge of $G$ and $k\ge \Delta(G) +1$, then
	$G-e$ has a $k$-dense subgraph $H$ containing $V(e)$ such that $e$ is also a $k$-critical edge of $H+e$.
\end{LEM}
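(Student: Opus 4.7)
The plan is to invoke the Goldberg-Seymour Conjecture (assumed throughout the paper) on $G$ itself to produce a subgraph witnessing $\Gamma(G) > k$, and then to argue that any such witness must already contain $V(e)$, so that removing $e$ from its induced subgraph yields the desired $k$-dense subgraph of $G-e$.

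Since $e$ is $k$-critical with $k\ge\Delta(G)+1$, we have $\chi'(G)=k+1\ge\Delta(G)+2$, so the Goldberg-Seymour Conjecture gives $\chi'(G)=\lceil\Gamma(G)\rceil=k+1$. Hence there exists $X\subseteq V(G)$ with $|X|$ odd, $|X|\ge 3$, and $\frac{2|E(G[X])|}{|X|-1}>k$. Because $|X|-1$ is even, this strict inequality forces $|E(G[X])|\ge\frac{(|X|-1)k}{2}+1$.

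Next I would show that $V(e)\subseteq X$. Suppose for contradiction that $e\notin E(G[X])$. Then $E((G-e)[X])=E(G[X])$ still satisfies $|E((G-e)[X])|\ge\frac{(|X|-1)k}{2}+1$, which forces $\lceil\Gamma(G-e)\rceil\ge k+1$ and hence $\chi'(G-e)\ge\lceil\Gamma(G-e)\rceil\ge k+1$, contradicting $\chi'(G-e)=k$. Therefore $V(e)\subseteq X$ and $e\in E(G[X])$. Now set $H:=(G-e)[X]$. The density bound above gives $|E(H)|=|E(G[X])|-1\ge\frac{(|X|-1)k}{2}$, while $\chi'(G-e)=k$ forces $\Gamma(G-e)\le k$ and hence $|E(H)|\le\frac{(|X|-1)k}{2}$. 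Equality holds, so $H$ is $k$-dense and contains $V(e)$.

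Finally, to verify that $e$ is a $k$-critical edge of $H+e$: applying Lemma~\ref{k-dense subgraph} inside $G-e$ (whose chromatic index is $k$) yields $\chi'(H)=k$, while $|E(H+e)|=\frac{(|V(H)|-1)k}{2}+1$ together with $|V(H)|\ge 3$ odd gives $\Gamma(H+e)>k$ and therefore $\chi'(H+e)\ge k+1$; since $H+e\subseteq G$, also $\chi'(H+e)\le\chi'(G)=k+1$, so $\chi'(H+e)=k+1$ as required.

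The argument is essentially a short density calculation once Goldberg-Seymour is available, and there is no serious obstacle; the only nontrivial step is observing that \emph{every} odd set $X$ witnessing $\Gamma(G)>k$ must contain $V(e)$, which is exactly what the $k$-criticality of $e$ forces.
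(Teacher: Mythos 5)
Your proof is correct and follows essentially the same route as the paper's: invoke Goldberg--Seymour on $G$, find an odd set witnessing $\Gamma(G)>k$, show the deleted edge lies inside it, and use $\chi'(G-e)=k$ to force equality in the density bound. The one difference is that you explicitly justify why the dense witness must contain $V(e)$ (otherwise $\Gamma(G-e)>k$), a step the paper states without comment, so your write-up is, if anything, slightly more complete.
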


\begin{proof}
Clearly, $\chi'(G) = k+1$ and $\chi'(G-e) =k$. By the assumption of the Goldberg-Seymour Conjecture,
$\chi'(G) =\lceil \Gamma(G) \rceil = k+1$. So, there exists a subgraph $H^*$ of odd order containing $e$  such that $|E(H^*)| > (|V(H^*)|-1)k/2$. On the other hand,
we have $\frac{2|E(H^*-e)|}{|V(H^*-e)|-1}\le\lceil \Gamma(H^*-e) \rceil \le\chi'(H^*-e) \le \chi'(G-e)=k$, which in turn gives $|E(H^*-e)| \le (|V(H^*)|-1)k/2$. Thus $|E(H^*-e)| =(|V(H^*)|-1)k/2$. Then $k\le \lceil \Gamma(H^*-e) \rceil\le\chi'(H^*-e)\le\chi'(G-e)=k$ and  $k+1\le\lceil \Gamma(H^*) \rceil \le\chi'(H^*)\le\chi'(G)=k+1$, which implies that $k=\chi'(H^*-e)<\chi'(H^*)=k+1$. Thus $H :=H^*-e$ is a $k$-dense subgraph containing $V(e)$, and $e$ is also a $k$-critical edge of $H+e$.
\end{proof}

The {\it diameter} of a graph $G$, denoted $\diam(G)$,  is the greatest distance between any pair of vertices  in $V(G)$.
\begin{LEM}\label{lem:elementarty-k-dense}
	Let $G$ be a multigraph with $\chi'(G)=k+1\geq \Delta(G)+2$ and $e$ be a $k$-critical edge of $G$.
	We have the following statements.
	
	$(a)$  $G-e$ has a unique maximal $k$-dense subgraph $H$ containing $V(e)$,  and $e$ is  also a $k$-critical edge of $H+e$.
	
	$(b)$ For any $\varphi\in \CC^{k}(G-e)$,  $H$ is $\varphi_H$-elementary and strongly $\varphi$-closed.
	
	$(c)$ If $\chi'(G)=\Delta(G)+\mu(G)$, then $\Delta(H+e)=\Delta(G)$, $\mu(H+e)=\mu(G)$ and   $\diam(H+e)\le \diam(H)\le 2$.
\end{LEM}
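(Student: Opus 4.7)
For part (a), the plan is to invoke Lemma~\ref{G-S-C} to obtain a $k$-dense subgraph $H^*$ of $G-e$ with $V(e)\subseteq V(H^*)$ such that $e$ is $k$-critical in $H^*+e$, and then to enlarge $H^*$ to a maximal $k$-dense subgraph $H$ of $G-e$. Because $\chi'(G-e)=k\geq\Delta(G-e)+1$, Lemma~\ref{MDS} applied to $G-e$ tells me that distinct maximal $k$-dense subgraphs are pairwise vertex-disjoint, so such an $H$ is unique (any other maximal $k$-dense subgraph containing $V(e)$ would share at least the two vertices of $V(e)$ with $H$). For $e$ still being $k$-critical in $H+e$, I would combine $\chi'(H)=k$ (from Lemma~\ref{k-dense subgraph} applied in $G-e$) with $\chi'(H+e)\geq\chi'(H^*+e)=k+1$ and $\chi'(H+e)\leq\chi'(G)=k+1$.

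Part (b) should be a direct instantiation of Lemma~\ref{k-dense subgraph} with ambient graph $G-e$ (of chromatic index $k$) and $k$-dense subgraph $H$. The only wrinkle is that ``strongly $\varphi$-closed'' is stated with respect to $\partial_G(H)$, but since $V(e)\subseteq V(H)$ we have $\partial_{G-e}(H)=\partial_G(H)$, so the $G-e$ and $G$ versions coincide.

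For part (c), the equalities $\Delta(H+e)=\Delta(G)$ and $\mu(H+e)=\mu(G)$ come from a pinching argument with Vizing's theorem: since $H+e\subseteq G$ we have $\Delta(H+e)\leq\Delta(G)$ and $\mu(H+e)\leq\mu(G)$, while the chain $k+1=\chi'(H+e)\leq\Delta(H+e)+\mu(H+e)\leq\Delta(G)+\mu(G)=k+1$ forces both upper bounds to be attained simultaneously. The inequality $\diam(H+e)\leq\diam(H)$ is immediate as adding an edge cannot increase distance.

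The main obstacle is the bound $\diam(H)\leq 2$. I would argue by contradiction: suppose $u,v\in V(H)$ with $d_H(u,v)\geq 3$, so $uv\notin E(H)$ and $N_H(u)\cap N_H(v)=\emptyset$. Fix any $\varphi\in\CC^k(H)$, which exists as $\chi'(H)=k$. By Lemma~\ref{k-dense subgraph} applied to $H$ itself, $V(H)$ is $\varphi$-elementary, and since $|V(H)|$ is odd with $|E(H)|=(|V(H)|-1)k/2$, summing $k-d_H(w)$ over $w\in V(H)$ shows every color in $[k]$ is missed at exactly one vertex of $H$. For any $\alpha\in\bar\varphi(u)$ and $\beta\in\bar\varphi(v)$, the $(\alpha,\beta)$-Kempe chain from $u$ in $H$ must terminate at $v$, giving an even-length $u$-$v$ path of length at least $4$. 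The plan is to pit these Kempe path structures against the counting bounds $d_H(u)+d_H(v)\geq k$ (from elementarity) and $|V(H)|(\mu-1)\leq k$ (from summing degrees versus $\Delta(H)\leq\Delta(G)$), and to iterate Kempe swaps over different color pairs $(\alpha,\beta)$, in order to either force a common neighbor of $u$ and $v$ (contradicting $d_H(u,v)\geq 3$) or produce a recoloring under which $u$ and $v$ share a missed color (contradicting elementarity).
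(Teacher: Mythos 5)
Your parts (a), (b), and the first half of (c) (the pinching with Vizing's theorem for $\Delta(H+e)$ and $\mu(H+e)$) match the paper's argument, and your remark about why $\partial_{G-e}(H)=\partial_G(H)$ is a correct, useful clarification the paper leaves implicit.

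The step $\diam(H)\le 2$ is a gap: you describe a plan rather than a proof, and the ingredients you propose do not close. Concretely, the two bounds you name --- $d_H(u)+d_H(v)\ge k$ for non-adjacent $u,v$ (from elementarity) and $n(\mu-1)\le k$ where $n=|V(H)|$ (from the degree sum) --- are not enough to contradict $N_H(u)\cap N_H(v)=\emptyset$. If $u,v$ have no common neighbor then $|N_H(u)|+|N_H(v)|\le n-2$, so $k\le d_H(u)+d_H(v)\le \mu(n-2)$; combined with $n(\mu-1)\le k$ this only yields $n\ge 2\mu$, not a contradiction. The iterated-Kempe-swap part of your plan, which is the part that would have to carry the remaining load, is entirely unspecified, and there is no indication of how it would terminate or what invariant it preserves.

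The paper's argument for $\diam(H)\le 2$ is a one-shot counting bound that needs no Kempe chains. Fix $\varphi\in\CC^k(G-e)$; by (b), $H$ is $\varphi_H$-elementary. Hence for each $x\in V(H)$, every color missing under $\varphi_H$ at any \emph{other} vertex of $H$ must be present at $x$, so $d_H(x)\ge\bigl|\bigcup_{v\ne x}\overline{\varphi}_H(v)\bigr|$. Since $k=\Delta+\mu-1$, each $v\notin V(e)$ has $|\overline{\varphi}_H(v)|\ge k-\Delta=\mu-1$ and each $v\in V(e)$ has $|\overline{\varphi}_H(v)|\ge\mu$ (one extra because $e$ is uncolored), and these sets are pairwise disjoint; this gives $d_H(x)\ge(\mu-1)(n-1)+1$ for every $x\in V(H)$, which is strictly stronger than the pairwise bound $d_H(u)+d_H(v)\ge k$ you were working with. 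Dividing by $\mu(H)\le\mu$ and using $\mu\ge 2$ (which holds because $\chi'(G)=\Delta+\mu\ge\Delta+2$) gives $|N_H(x)|\ge\bigl((\mu-1)(n-1)+1\bigr)/\mu\ge n/2$. So every vertex is adjacent to at least half of $V(H)$, any two vertices share a common neighbor by inclusion-exclusion, and $\diam(H)\le 2$ follows directly. You should replace your Kempe-chain sketch with this counting argument; as written, that step is not a proof.
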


\begin{proof}  By Lemma \ref{G-S-C},
$G-e$ contains a $k$-dense subgraph $H$  containing $V(e)$ and $e$ is also a $k$-critical edge of $H+e$.
We may assume that $H$ is a maximal $k$-dense subgraph, and the uniqueness of $H$ is a direct consequence of Lemma \ref{MDS}. This proves $(a)$.
By applying Lemma \ref{k-dense subgraph} on $G-e$, we immediately have statement $(b)$.

For $(c)$, by $(a)$ and Vizing's Theorem, $\D(G)+\mu(G)=\chi'(G)=\chi'(H+e)\le \D(H+e)+\mu(H+e)\le\D(G)+\mu(G)$ implying that $\Delta(H+e)=\Delta(G)=\D$ and $\mu(H+e)=\mu(G)=\mu$. For any  $\varphi\in \CC^{k}(G-e)$,  $H$ is $\phiv_H$-elementary by $(b)$.
For any $x\in V(H)$, with respect to $\phiv_H$,
all the colors missing at other vertices of $H$ present at $x$. Note that $k=\D+\mu-1$. For each vertex $v\in V(H)$, we have that
$|\pbar_H(v)|=k-d_H(v) \ge k -\D = \mu -1$ if $v\notin V(e)$,  and $|\pbar_H(v)| = k -d_H(v)+1 \ge k -\D +1 \ge (\mu-1) +1$ if $v\in V(e)$. Denote $|V(H)|$ by $n$.
We then have
$d_H(x)\ge |\bigcup_{v\in V(H),v\ne x} \pbar_H(v)| \ge (k-\Delta)(n-1)+1=(\mu-1)(n-1)+1$.

Since $\mu(H) \le \mu(G) = \mu$,
we get $|N_H(x)|\ge \frac{d_H(x)}{\mu} \ge \frac{(\mu-1)(n-1)+1}{\mu}$, where $N_H(x)$ is the neighbor set of $x$ in $H$.
Since $\mu \ge 2$, we have  $\frac{(\mu-1)(n-1)+1}{\mu}
\ge \frac{n}{2}$.  Hence, every vertex in $H$ is adjacent to at least half vertices in $H$. Consequently, every two vertices of $H$ share a common neighbor, which in turn gives  $\diam(H) \le 2$. This proves $(c)$.
\end{proof}

An {\em $i$-edge} is an edge colored with the color $i$. The following technical lemma will be used several times  in our proof.

\begin{LEM}\label{lem-consisting} Let $G$ be a graph with $\chi'(G)=k$ and $H$ be a $k$-dense subgraph of $G$.
Let $\psi$ and $\varphi$ respectively be
$k$-edge-colorings of $H$ and $G-E(H)$ such that colors on edges in $\partial_G(H)$ are pairwise distinct under $\varphi$. The following two statements hold.

$(a)$ If $k \ge \D(G)$, then by renaming color classes of $\psi$ on $E(H)$,  we can obtain a (proper) $k$-edge-coloring  of $G$ by combining $\phiv$ and the modified coloring based on $\psi$.

$(b)$ For any fixed color $i\in[k]$, if $k \ge \D(G) +1$, then by renaming other color classes of $\psi$ on $E(H)$  we can obtain a coloring of $G$  such that all color classes are matchings except the $i$-edges. The only exception is as follows: exactly one $i$-edge from $E(H)$ and exactly one $i$-edge from $\partial_G(H)$ share an endvertex.
\end{LEM}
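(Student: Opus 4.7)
The plan is to reduce both parts to a Hall-type matching problem for a renaming permutation on colors. The structural foundation is that since $H$ is $k$-dense, the argument in the proof of Lemma~\ref{k-dense subgraph} shows that every $k$-edge-coloring of $H$ (including $\psi$) has each color class a matching of size exactly $(|V(H)|-1)/2$; equivalently, each color of $[k]$ is missing at a unique vertex of $H$, so the missing-color sets $\overline{\psi}(v)$ partition $[k]$ with $|\overline{\psi}(v)|=k-d_H(v)$. For each $v\in V(H)$, write $P_v$ for the set of $\varphi$-colors appearing on edges of $\partial_G(H)$ incident to $v$; by hypothesis these are pairwise disjoint across $v$, and $|P_v|=d_G(v)-d_H(v)$.

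For part $(a)$, I search for a permutation $\sigma$ of $[k]$ such that replacing $\psi$ by $\sigma\circ\psi$ on $E(H)$, combined with $\varphi$ on $G-E(H)$, yields a proper coloring of $G$. At any vertex outside $V(H)$ no conflict can arise, since $\varphi$ is already proper on $G-E(H)$. At $v\in V(H)$ the two sides contribute color sets $\sigma(\psi(v))$ and $P_v$, and properness there is equivalent to $\sigma^{-1}(c')\in\overline{\psi}(v)$ for every $c'\in P_v$. I construct such a $\sigma$ via Hall's theorem on the bipartite graph joining each $c'\in P_v$ to $\overline{\psi}(v)$. Using the disjointness of the $P_v$ and of the $\overline{\psi}(v)$, Hall's condition for any $V'\subseteq V(H)$ reduces to
\[
\sum_{v\in V'}(k-d_H(v))\;\ge\;\sum_{v\in V'}(d_G(v)-d_H(v)),
\]
i.e.\ $\sum_{v\in V'}(k-d_G(v))\ge 0$, which holds because $k\ge\Delta(G)$. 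Extending the resulting partial injection to a bijection of $[k]$ gives the desired $\sigma$.

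For part $(b)$ I repeat the construction, but rename only colors different from $i$, so $\sigma$ becomes a permutation of $[k]\setminus\{i\}$ and the unique vertex $w_i\in V(H)$ missing $i$ under $\psi$ remains the unique vertex missing $i$ afterward. A failure of properness at $v\in V(H)$ can occur in only two ways: $(i)$ some $c'\in P_v\setminus\{i\}$ fails to satisfy $\sigma^{-1}(c')\in\overline{\psi}(v)\setminus\{i\}$, or $(ii)$ $i\in P_v$ and $v\ne w_i$. To preclude $(i)$, I again invoke Hall's theorem, now on $[k]\setminus\{i\}$; the key inequality becomes
\[
\sum_{v\in V'}(k-d_G(v))\;\ge\;[\,w_i\in V'\,],
\]
which holds because $k\ge\Delta(G)+1$ makes each summand at least $1$, so the left side is at least $|V'|\ge 1$ whenever $V'\ne\emptyset$. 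An occurrence of $(ii)$ is exactly the allowed exception: the unique $i$-edge in $\partial_G(H)$ shares its $V(H)$-endpoint with the unique $i$-edge of $\psi$ at that vertex.

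The main obstacle is the Hall bookkeeping for part $(b)$: one must subtract the $i$-slot at $w_i$ from the supply of missing colors and absorb it with the extra unit of slack granted by $k\ge\Delta(G)+1$. Once the Hall computation is secured, extending the partial injection to a full permutation and checking properness at vertices of $V(G)\setminus V(H)$ is routine.
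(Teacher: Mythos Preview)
Your proof is correct and follows essentially the same approach as the paper: both exploit that the sets $\overline{\psi}(v)$ partition $[k]$ (since $H$ is $\psi$-elementary) and that the $P_v$ are pairwise disjoint, then use the vertexwise inequality $|\overline{\psi}(v)|\ge|P_v|$ (resp.\ $|\overline{\psi}(v)\setminus\{i\}|\ge|P_v\setminus\{i\}|$) to produce the renaming permutation. The only difference is cosmetic: the paper constructs $\sigma$ directly block by block---since the $\overline{\psi}(v)$ are disjoint, one can independently inject each $P_v$ into $\overline{\psi}(v)$ and extend---whereas you route the same counting through Hall's theorem, which is slightly more machinery than the block structure requires but of course yields the same conclusion.
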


\begin{proof}
Since   $\chi'(G) = k$ and $H$ is $k$-dense,  $\chi'(H) = k$ and $H$ is $\psi$-elementary by Lemma \ref{k-dense subgraph}.  This following fact will be used
to combine an edge coloring of $H$ and an edge coloring of $G-E(H)$
into an edge coloring of $G$:  for any distinct $u,v\in V(H)$, $\overline{\psi}(u)\cap \overline{\psi}(v)=\emptyset$, and  no two colors on edges in $\partial_G(H)$ under $\varphi$ are the same.

For $(a)$, we have
$|\overline{\psi}(v)| = k -d_H(v) \ge \D(G) - d_H(v) \ge d_{G-E(H)}(v)=|\phiv(v)|$ for each $v\in V(H)$.
So, by renaming color classes of $\psi$ on $E(H)$, we may assume that $\phiv(v)\subseteq\overline{\psi}(v)$ for each $v\in V(H)$. The combination of $\phiv$  and the modified coloring based on $\psi$  gives a desired proper edge coloring of $G$.

For $(b)$, under the condition $k \ge \D(G) +1$, we have  $|\overline{\psi}(v)| = k -d_H(v) \ge \D(G) +1 - d_H(v) \ge d_{G-E(H)}(v) +1=|\phiv(v)|+1$ for each $v\in V(H)$. So
$|\overline{\psi}(v)\backslash \{i\}| \ge |\phiv(v)\backslash \{i\}|$.  Notice that when $i\in \overline{\psi}(v) \cap \pbar(v)$,
we need $|\overline{\psi}(v)| -1 \ge |\phiv(v)|$ to ensure the inequality above, where the condition  $k \ge \D(G) +1$ is applied. By renaming color classes of $\psi$ on $E(H)$ except the $i$-edges (keeping all $i$-edges unchanged and other color classes not renamed by $i$), we may assume that $\phiv(v) \backslash \{i\} \subseteq \overline{\psi}(v)$ for each $v\in V(H)$. Again, the combination of  $\phiv$ and the modified coloring based on $\psi$  gives a desired  coloring of $G$. The only case that the set of $i$-edges
is not a matching is when exactly one $i$-edge from $E(H)$ and exactly one $i$-edge from $\partial_G(H)$ share an endvertex, since colors on edges in $\partial_G(H)$ are pairwise distinct under $\varphi$.
\end{proof}

\section{Refinements of multi-fans and some consequences}

We first recall Kempe-chains and related terminologies.
Let $\phiv$ be a $k$-edge-coloring of $G$ using the palette
$[k]$.
Given two distinct colors $\alpha,\beta$, an {\it $(\alpha, \beta)$-chain} is
a component of  the  subgraph induced by
edges assigned color $\alpha$ or $\beta$ in $G$, which is either an even cycle or a path.
We call the operation that swaps the colors $\alpha$ and $\beta$
on an $(\alpha,\beta)$-chain  the {\it Kempe change}.
Clearly, the resulting coloring after a Kempe change is
still a (proper)  $k$-edge-coloring. Furthermore, we say that a chain has  {\em endvertices} $u$ and $v$ if the chain is a path connecting vertices $u$ and $v$. For a vertex $v\in V(G)$, we denote by $P_v(\alpha,\beta)$ the unique $(\alpha,\beta)$-chain containing the vertex $v$. For two vertices $u$, $v\in V(G)$, the two chains $P_u(\alpha,\beta)$ and $P_v(\alpha,\beta)$ are either identical or  disjoint.
More generally, for an $(\alpha,\beta)$-chain, if it is a path and it contains two vertices $a$ and $b$, we let $P_{[a,b]}(\alpha, \beta)$ be its subchain  with endvertices $a$ and $b$.
The operation of swapping colors $\alpha$ and $\beta$ on the subchain $P_{[a,b]}(\alpha, \beta)$ is still called a  Kempe change,
but the resulting coloring may no longer be a proper edge coloring.

Let $G$ be a graph with an edge $e\in E_G(x,y)$, and $\varphi$ be a proper edge coloring of $G$ or $G-e$. A sequence
$F=(x,e_0,y_0,e_1, y_1, \ldots, e_p, y_p)$ with integer $p\ge 0$ consisting of vertices
and distinct edges is called a (general)
{\em  multi-fan} at $x$ with respect to $e$ and $\varphi$ if $e_0=e$,  $y_0=y$, for each $i\in [p]$, $e_i\in E_G(x,y_i)$  and
there is a vertex $y_j$ with $0\leq j\leq i-1$ such that $\phiv(e_i) \in \overline{\varphi}(y_j)$.
Notice that the  definition of a multi-fan in this paper is  slightly general than the one in \cite{SSTF} since the edge $e$ may be colored in $G$.
We say a multi-fan $F$ is {\em maximal} if  there is no  multi-fan containing $F$ as a proper subsequence. Similarly, we say a multi-fan $F$ is {\em maximal without any $i$-edge} if  $F$ does not contain any $i$-edge and there is no multi-fan without any $i$-edge containing $F$ as a proper subsequence. The set of vertices and edges contained in $F$ are denoted by $V(F)$ and $E(F)$, respectively. Let $e_G(x,y)=|E_G(x,y)|$ for $x,y\in V(G)$. Note that a multi-fan may have repeated vertices.  By $e_F(x,y_i)$ for some $y_i\in V(F)$ we mean the number of edges joining $x$ and $y_i$ in $F$.

Let $s\ge 0$ be an integer.  A {\em linear sequence} $S=(y_0, e_1,y_1,\dots,e_s,y_s)$ at $x$ from $y_0$ to $y_s$ in $G$ is a sequence consisting of distinct vertices and distinct edges such that $e_i\in E_G(x,y_i)$ for $i\in[s]$ and $\varphi(e_i)\in\overline\varphi(y_{i-1})$ for $i\in [s]$. Clearly for any $y_j\in V(F)$, the multi-fan $F$ contains a linear sequence at $x$ from $y_0$ to $y_j$ (take a shortest sequence $(y_0, e_1,y_1,\dots,e_j,y_j)$ of vertices and edges with the property that $e_i\in E_G(x,y_i)\cap E(F)$ for $i\in[j]$ and $\varphi(e_i)\in\overline\varphi(y_{i-1})$ for $i\in [j]$).  
The following local edge recoloring operation will be used in our proof.
A {\em shifting} from $y_i$ to $y_j$ in the linear sequence $S$ is an operation that replaces the current color of $e_t$ by the color of $e_{t+1}$ for each $i\le t\leq j-1$ with $1\leq i< j\leq s$. Note that the shifting does not change the color of $e_j$, where $e_j$ joins $x$ and $y_j$, so the resulting coloring after a shifting is   not  a proper coloring. In our proof we will uncolor or recolor the edge $e_j$ to make the resulting coloring proper. We also denote by
$V(S)$ and $E(S)$ the set of vertices and the set of edges contained in the linear sequence $S$, respectively.

\begin{LEM} {\em \cite{Goldberg1974,SSTF}} \label{MF}
	Let $G$ be a graph, $e\in E_G(x,y)$ be a $k$-critical edge and $\phiv\in \CC^k(G-e)$  with $k \ge \D(G)$. Let $F=(x,e,y_0,e_1, y_1, \ldots, e_p, y_p)$ be a multi-fan at $x$ with respect to $e$ and $\varphi$, where $y_0=y$. Then the following statements hold.
	
	$(a)$ $V(F)$ is $\varphi$-elementary, and each edge in $E(F)$ is a $k$-critical edge of $G$.
	
	$(b)$ If $\alpha \in \overline{\phiv}(x)$ and $\beta \in \overline{\phiv}(y_i)$ for $0\le i\le p$, then $P_x(\alpha,\beta)=P_{y_i}(\alpha,\beta)$.
	
	$(c)$ If $F$ is a maximal multi-fan at $x$ with respect to $e$ and $\phiv$,
 then $x$ is adjacent in $G$ to at least $\chi'(G)-d_G(y)-e_G(x,y)+1$ vertices $z$ in $V(F)\backslash \{x,y\}$ such that $d_G(z)+e_G(x,z)=\chi'(G)$.
\end{LEM}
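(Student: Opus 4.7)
The plan is to run the classical shifting-and-Kempe-change argument: the criticality of $e$ and the maximality of $F$ jointly force strong constraints on the missing-color structure of $V(F)$.

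For part (a), I would argue by contradiction. Suppose two fan-vertices $y_i, y_j \in V(F)$ with $i<j$ share a missing color $\gamma$, with $j$ chosen minimal. The fan definition gives a linear sequence $S=(y_0,e_{i_1},y_{i_1},\ldots,e_{i_t},y_{i_t})$ in $F$ with $y_{i_t}=y_j$. Perform a shifting along an appropriate segment of $S$; after the shift, either the edge $e$ itself or the newly vacated edge $e_{i_t}$ can be colored with $\gamma$, producing a proper $k$-edge-coloring of $G$ and contradicting $\chi'(G)=k+1$. The same shift converts a $k$-edge-coloring of $G-e$ into one of $G-e_j$, showing every fan edge $e_j$ is itself a $k$-critical edge of $G$.

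For part (b), assume $P_x(\alpha,\beta)\ne P_{y_i}(\alpha,\beta)$ and apply the Kempe change to $P_{y_i}(\alpha,\beta)$ alone, producing $\varphi'$. Since $x$ lies outside this chain, $\alpha\in\overline{\varphi'}(x)$ still, and by construction $\alpha\in\overline{\varphi'}(y_i)$. Elementariness from (a) forces $\beta$ to be missing within $V(F)$ only at $y_i$, so $P_{y_i}(\alpha,\beta)$ meets $V(F)$ only at $y_i$ and none of the fan edges has its color changed; hence $F$ is still a multi-fan under $\varphi'$. Applying (a) to $\varphi'$ then contradicts $\alpha\in\overline{\varphi'}(x)\cap\overline{\varphi'}(y_i)$.

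For part (c), I would use a double counting. By (a) and the maximality of $F$, every color in $\overline{\varphi}(V(F)\setminus\{x\})$ must be present at $x$ on some fan edge $e_j$ with $j\in[p]$: otherwise either two missing colors coincide at $x$ and a fan vertex (contradicting (a)), or the fan can be extended by the edge carrying that color (contradicting maximality). Elementariness then gives
\[
(k-d_G(y)+1)+\sum_{z\in V^*}(k-d_G(z))=|\overline{\varphi}(V(F)\setminus\{x\})|=p,
\]
where $V^*=V(F)\setminus\{x,y\}$. Combined with $p+1=|E(F)|\le e_G(x,y)+\sum_{z\in V^*}e_G(x,z)$, a rearrangement yields
\[
\sum_{z\in V^*}\bigl(d_G(z)+e_G(x,z)-k\bigr)\ge \chi'(G)-d_G(y)-e_G(x,y)+1.
\]
Since each summand is bounded above by $\chi'(G)-k=1$ (applying a standard Vizing-type argument to the $k$-critical edge $e_j$ joining $x$ and $z$, whose criticality was just established in (a)), the number of $z\in V^*$ realizing $d_G(z)+e_G(x,z)=\chi'(G)$ is at least $\chi'(G)-d_G(y)-e_G(x,y)+1$.

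The principal obstacle will be arranging the shift in (a): the operation must leave exactly one edge improperly colored (or uncolored) so that $\gamma$ is safely placed there, which requires a delicate minimality argument on $j$ and a case split on whether $i=0$. Once (a) is secure, (b) reduces to a short Kempe argument and (c) to the counting above.
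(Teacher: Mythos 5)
The paper cites this lemma from \cite{Goldberg1974,SSTF} and does not reprove it, so there is no in-paper argument to compare against; I therefore assess your sketch on its own terms. Your route for (a) and (b) (shift down a linear sequence to free a color for $e$ or $e_j$, then a Kempe swap on $P_{y_i}(\alpha,\beta)$) is the standard one. One small fix for (b): after swapping $P_{y_i}(\alpha,\beta)$ the whole fan $F$ need not remain a multi-fan, since a later fan edge colored $\beta$ may lose $y_i$ as its witness; what does survive is the initial segment $(x,e,y_0,\dots,e_i,y_i)$, which already yields $\alpha\in\overline{\varphi'}(x)\cap\overline{\varphi'}(y_i)$ and hence the contradiction with (a). You should argue with that truncation rather than with $F$ itself.

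The genuine gap is in (c). Your double counting correctly reproduces the fan inequality, and you are right that the conclusion would follow if each summand $d_G(z)+e_G(x,z)-k$ were at most $1$. But your justification --- ``a standard Vizing-type argument applied to the $k$-critical edge $e_j$ joining $x$ and $z$'' --- does not deliver this. A $k$-coloring of $G-e_j$ gives only the elementariness inequality $(k-d_G(x)+1)+(k-d_G(z)+1)+(e_G(x,z)-1)\le k$, i.e.\ $d_G(x)+d_G(z)\ge k+1+e_G(x,z)$, which is a \emph{lower} bound on $d_G(z)$ and says nothing of the form $d_G(z)+e_G(x,z)\le k+1$. Concretely, take $V(G)=\{x,z,w\}$ with $e_G(x,z)=3$, $e_G(z,w)=e_G(x,w)=1$: then $\chi'(G)=5$, $\D(G)=4=k$, the edge $e\in E_G(x,w)$ and every $xz$-edge are $k$-critical, yet $d_G(z)+e_G(x,z)=7>k+1$, so the bound you appeal to fails. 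The careful argument (Stiebitz et al.) works with $e_F(x,z)$ rather than $e_G(x,z)$, establishes $e_F(x,z)\le k-d_G(z)+1$ from the fan structure, and separately shows $e_F(x,z)=e_G(x,z)$ in the equality cases; alternatively, the only place this paper actually invokes part (c) is through Lemma~\ref{MD}, where $\chi'(G)=\D+\mu$ and the bound $d_G(z)+e_G(x,z)\le\D+\mu=k+1$ is trivial. As written, your one-line appeal conceals precisely the hard step of (c).
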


A {\em $\Delta$-vertex} in $G$ is a vertex with degree exactly $\Delta$ in $G$. A {\em $\Delta$-neighbor} of a vertex $v$  in $G$ is a neighbor of $v$ that is a $\Delta$-vertex in $G$.
\begin{LEM}  \label{MD}
Let $G$ be a  multigraph with maximum degree
$\Delta$ and maximum multiplicity $\mu\ge2$.  Let $e\in E_G(x,y)$  and $k=\Delta+\mu-1$.

Assume that   $\chi'(G)=k+1$,  $e$ is $k$-critical and $\phiv\in \CC^{k}(G-e)$. Let $F=(x,e,y_0,e_1, y_1, \ldots,\\ e_p, y_p)$ be a multi-fan at $x$ with respect to $e$ and $\varphi$, where $y_0=y$.
Then the following statements hold.
	
	$(a)$  If $F$ is maximal,  then $x$ is adjacent in $G$ to at least $\Delta+\mu-d_G(y)-e_G(x,y)+1$ vertices $z$ in $V(F)\backslash \{x,y\}$ such that $d_G(z)=\Delta$ and $e_G(x,z)=\mu$.
	
	$(b)$  If $F$ is maximal, $d_G(y)=\Delta$ and  $x$ has only one $\Delta$-neighbor $z'$ in $G$ from $V(F)\backslash \{x,y\}$, then $e_F(x,z)=e_G(x,z)=\mu$ for all $z\in V(F)\backslash \{x\}$ and $d_G(z)=\Delta-1$ for all $z\in V(F)\backslash \{x,y,z'\}$.
	
	$(c)$  For $i\in [k]$ and $i\notin \overline{\phiv}(y)$, if $F$ is maximal without any $i$-edge, then $F$ not containing any $\Delta$-vertex of $G$ from $V(F)\backslash \{x,y\}$ implies that $d_G(y)=\D$, and there exists a vertex $z^*\in V(F)\backslash \{x,y\}$ with $i\in \overline{\phiv}(z^*)$ such that $d_G(z^*)=\Delta-1$.
	
Assume that $\chi'(G)=k$, $\phiv\in \CC^{k}(G)$ and $V(G)$ is  $\phiv$-elementary. Then the following statement holds.
	
	$(d)$ If a multi-fan  $F'$ is maximal at $x$ with respect to $e$ and $\varphi$  in $G$, then $x$ having no $\Delta$-neighbor in $G$ from $V(F')$ implies that $d_G(z)=\Delta-1$ for all $z\in V(F')\backslash \{x\}$ and every edge in $F'$  is colored by a missing color at some vertex in $V(F')$.
Furthermore, for $i\in [k]$ and $\varphi(e)\notin \overline{\phiv}(V(F'))$, if $F'$ is maximal without any $i$-edge, then $F'$ not containing any $\Delta$-vertex in $G$ from $V(F')\backslash \{x\}$ implies that there exists a vertex $z^*\in V(F')\backslash \{x\}$ with $i\in \overline{\phiv}(z^*)$ such that $d_G(z^*)=\Delta-1$.
\end{LEM}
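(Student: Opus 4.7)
Part (a) drops out of Lemma~\ref{MF}(c). Since $\chi'(G)=k+1=\Delta+\mu$, that lemma furnishes at least $\Delta+\mu-d_G(y)-e_G(x,y)+1$ vertices $z\in V(F)\setminus\{x,y\}$ with $d_G(z)+e_G(x,z)=\Delta+\mu$; combined with the ceilings $d_G(z)\le\Delta$ and $e_G(x,z)\le\mu$, this pins down $d_G(z)=\Delta$ and $e_G(x,z)=\mu$ individually.

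Parts (b), (c) and (d) share a common double-counting skeleton: set $W=V(F)\setminus\{x\}$ and estimate $|\pbar(W)|$ from two sides. Because $V(F)$ is $\varphi$-elementary (by Lemma~\ref{MF}(a) in (b) and (c), and by hypothesis in (d)), we have $|\pbar(W)|=\sum_{v\in W}|\pbar(v)|$, which is $\sum_{v\in W}(k-d_G(v))$ when $e$ is colored, and carries an extra $+1$ at $v=y$ when $e$ is uncolored. The elementary property also gives $\pbar(W)\cap\pbar(x)=\emptyset$, so every color of $\pbar(W)$ is present at $x$; maximality of $F$ then forces each such color to be the color of some edge $e_j\in E(F)$, and since the $p+1$ edges at $x$ carry distinct colors we get $|\pbar(W)|\le p+1$. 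In the without-$i$-edge variants, at most one extra color (namely $i$) can escape this argument, so $|\pbar(W)|\le p+1$ if $i\in\pbar(W)$ and $|\pbar(W)|\le p$ otherwise; in part (d) the hypothesis $\varphi(e)\notin\pbar(V(F'))$ prevents $\varphi(e)$ from playing the same escape role. Finally, $p+1=|E(F)|=\sum_{v\in W}e_F(x,v)\le\mu|W|$.

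For part (b), the single-$\Delta$-neighbor hypothesis reduces the Lemma~\ref{MF}(c) bound to $1$, yielding $e_G(x,y)=\mu$; substituting $d_G(y)=\Delta$ and $d_G(v)\le\Delta-1$ for $v\in W\setminus\{y,z'\}$ into the two-sided bound saturates every inequality, so $e_F(x,v)=e_G(x,v)=\mu$ throughout $W$ and $d_G(v)=\Delta-1$ off $\{y,z'\}$. For part (c), the no-$\Delta$-vertex hypothesis raises each $k-d_G(v)$ for $v\in W\setminus\{y\}$ to at least $\mu$; the subcase $i\notin\pbar(W)$ then forces $d_G(y)\ge\Delta+1$ and is impossible, so $i\in\pbar(W)$, and the resulting equality chain pins down $d_G(y)=\Delta$ together with $d_G(v)=\Delta-1$ on $W\setminus\{y\}$. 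Since $i\notin\pbar(y)$, the color $i$ must be missing at some $z^*\in V(F)\setminus\{x,y\}$, which then has $d_G(z^*)=\Delta-1$. Part (d) runs in parallel: the $+1$ correction at $y$ vanishes because $e$ is colored, and $\varphi(e)\notin\pbar(V(F'))$ replaces the role played by $i\notin\pbar(y)$; the first statement follows because the equality chain forces $|\pbar(W)|=p+1$, so every $\varphi(e_j)$ lies in $\pbar(W)$, and the second statement is produced as in (c).

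The main obstacle I anticipate is bookkeeping rather than a new idea: one must apply the $+1$ correction to $|\pbar(W)|$ at $y$ precisely when $e$ is uncolored, and, in the without-$i$-edge parts, argue that $i$ (respectively $\varphi(e)$ in (d)) is the only color of $\pbar(W)$ that maximality can fail to drag into $E(F)$. Either slip erases the sharp equality chain that identifies $d_G(v)=\Delta-1$ and $e_F(x,v)=\mu$.
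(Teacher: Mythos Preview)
Your approach is correct and is essentially the paper's own argument: both proofs use elementarity of $V(F)$ plus maximality of $F$ to run the same double count $q\mu \ge \sum_{z\in W} e_G(x,z) \ge \sum_{z\in W} e_F(x,z)$ against $\sum_{z\in W}|\pbar(z)|$, and then read off degrees and multiplicities from the resulting equalities. The one place your write-up slips is exactly the bookkeeping you flag: when $e$ is uncolored (parts (b) and (c)) only $p$ of the $p{+}1$ fan edges carry colors, so the correct relation is $|\pbar(W)|=p$ (equivalently $\sum e_F(x,z)=1+\sum|\pbar(z)|$, which is how the paper states it), not merely $|\pbar(W)|\le p{+}1$; with your looser bound the chain in (b) reads $q\mu-1\le p{+}1\le q\mu$ and does not saturate.
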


\begin{proof}
	For statements $(a)$, $(b)$  and $(c)$,
	$V(F)$ is $\phiv$-elementary by Lemma \ref{MF}$(a)$. Statement  $(a)$ holds easily by Lemma \ref{MF}$(c)$.
	Assume that there are $q$ distinct vertices in $V(F)\backslash \{x\}$.

	For $(b)$, we have
	\begin{eqnarray*}
		q\mu &\ge&\sum_{z\in V(F)\backslash \{x\}}e_G(x,z)\ge\sum_{z\in V(F)\backslash \{x\}}e_F(x,z)=1+\sum_{z\in V(F)\backslash \{x\}}|\overline{\phiv}(z)|\\
		&\ge&1+(k-\Delta+1)+(k-\Delta)+(q-2)(k-\Delta+1)=q(k-\Delta+1)= q\mu,
	\end{eqnarray*}
as $|\overline{\phiv}(y)|=k-\Delta+1$, $|\overline{\phiv}(z')|=k-\Delta$ and $|\overline{\phiv}(z)|\ge k-\Delta+1$ for $z\in V(F)\backslash \{x,y,z'\}$.
Therefore, $e_F(x,z)=e_G(x,z)=\mu$ for each $z\in V(F)\backslash \{x\}$ and $d_G(z)=\Delta-1$ for each $z\in V(F)\backslash \{x,y,z'\}$. This proves $(b)$.
	
	Now for $(c)$, we must have that there exists a vertex $z^*\in V(F)\backslash \{x,y\}$ with $i\in \overline{\phiv}(z^*)$, since otherwise by (a), $x$ has at least one $\Delta$-neighbor in $G$ from $V(F)\backslash \{x,y\}$, a contradiction. Since  $V(F)$ is $\phiv$-elementary, $x$ must be incident with an $i$-edge. Since now there is no $i$-edge in $F$ and $i\in\overline{\phiv}(z^*)$, we have
	\begin{eqnarray*}
		q\mu &\ge&\sum_{z\in V(F)\backslash \{x\}}e_G(x,z)\ge \sum_{z\in V(F)\backslash \{x\}}e_F(x,z)=1+(|\overline{\phiv}(z^*)|-1)+\sum_{z\in V(F)\backslash \{x,z^*\}}|\overline{\phiv}(z)|\\
		&\ge&1+k-\Delta+(q-1)(k-\Delta+1)=q(k-\Delta+1)= q\mu.
	\end{eqnarray*}
	Therefore, $d_G(y)=\Delta$ and $d_G(z)=\Delta-1$ for each $z\in V(F)\backslash \{x,y\}$. This proves $(c)$.
	
	Statement $(d)$ follows from similar calculations as in the proof of $(b)$ and $(c)$.
\end{proof}

Let $G$ be a graph with maximum degree $\D$ and maximum multiplicity $\mu$. Berge and Fournier~\cite{Berge1991A} strengthened
the classical Vizing's Theorem by showing that
if $M^*$ is a maximal matching of $G$, then $\chi'(G- M^*) \le \Delta+\mu -1$. An edge $e\in E_G(x,y)$ is
{\it fully $G$-saturated} if $d_G(x) = d_G(y) = \D$ and $e_G(x,y) = \mu$. For every graph $G$ with $\chi'(G) = \D + \mu$, observe that $G$ contains a  $(\Delta+\mu-1)$-critical subgraph $H$ with $\chi'(H)=\D + \mu$ and $\D(H)=\D$ by Lemma \ref{lem:elementarty-k-dense}$(c)$, and $G$ contains at least two fully $G$-saturated edges  by Lemma \ref{MD}$(a)$.

Stiebitz et al.[Page 41 Statement (a), \cite{SSTF}] obtained the following generalization of Vizing's Theorem with an elegant short proof:
{\em Let $G$ be a graph and let $k \ge \D + \mu$ be an integer. Then there is a
$k$-edge-coloring $\phiv$ of $G$  such that every edge $e$  with $\phiv(e) =k$ is fully G-saturated}.
We observe that their proof actually gives a slightly stronger result which also
generalizes the Berge-Fournier theorem as follows.

\begin{LEM}\label{lem:BF-gen}
Let $G$ be a graph, and $M$ and $M'$ be two vertex-disjoint matchings of $G$. If every edge of $M'$ is fully $G$-saturated  and $M'$ is maximal subject to this property, then $\chi'(G- (M\cup M')) \le\D(G) + \mu(G) -1$.
\end{LEM}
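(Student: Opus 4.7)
The plan is to argue by contradiction. Suppose $\chi'(G - (M \cup M')) \ge \Delta + \mu$ and set $G' = G - (M \cup M')$. Since $G' \subseteq G$, we have $\Delta(G') \le \Delta$ and $\mu(G') \le \mu$, and Vizing's theorem gives $\chi'(G') \le \Delta(G') + \mu(G')$. These inequalities collapse to $\chi'(G') = \Delta(G') + \mu(G') = \Delta + \mu$ with $\Delta(G') = \Delta$ and $\mu(G') = \mu$.

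Next I would feed $G'$ into the observation recorded just before the lemma. Since $\chi'(G') = \Delta(G') + \mu(G')$, Lemma~\ref{lem:elementarty-k-dense}(c) produces a $(\Delta+\mu-1)$-critical subgraph of $G'$ of maximum degree $\Delta$, and Lemma~\ref{MD}(a) applied to that subgraph yields a fully $G'$-saturated edge $e_1 \in E_{G'}(x_1, y_1)$; that is, $d_{G'}(x_1) = d_{G'}(y_1) = \Delta$ and $e_{G'}(x_1, y_1) = \mu$.

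The final, and essentially only new, step is to lift $e_1$ from $G'$ back to $G$. Because $\Delta$ is already the maximum degree of $G$, the chain $\Delta \ge d_G(x_1) \ge d_{G'}(x_1) = \Delta$ forces $d_G(x_1) = d_{G'}(x_1)$; hence every edge of $G$ at $x_1$ already lies in $G'$, and so $x_1 \notin V(M \cup M')$. The same applies to $y_1$, and an analogous squeeze gives $e_G(x_1, y_1) = \mu$, so $e_1$ is fully $G$-saturated. But then $M' \cup \{e_1\}$ is a matching of fully $G$-saturated edges strictly containing $M'$, contradicting the maximality of $M'$.

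I do not foresee a serious obstacle. The only thing one has to spot is that the observation preceding the lemma is really a statement about any graph on which Vizing's bound is tight, so it applies verbatim to the auxiliary graph $G'$; and that a fully $G'$-saturated edge automatically has both endvertices outside $V(M \cup M')$, so it is already a fully $G$-saturated edge available for extending $M'$. The recolouring argument used by Stiebitz et al.\ presumably runs a constructive version of the same idea, but given that the critical-subgraph machinery has already been developed in this paper, the contradiction route above is the cleanest.
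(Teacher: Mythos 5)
Your proof is correct and follows the same route as the paper: suppose $\chi'(G') = \Delta + \mu$, invoke the observation before the lemma together with Lemma~\ref{MD}(a) to produce a fully $G'$-saturated edge, and then contradict the maximality of $M'$. You spell out the lifting step (a fully $G'$-saturated edge has both endvertices of degree $\Delta$ in $G'$, hence outside $V(M \cup M')$, hence is a fully $G$-saturated edge of $G - V(M \cup M')$) more carefully than the paper, whose terse phrasing ``$G'$ does not have a fully $G$-saturated edge'' is not literally what is needed, but the underlying argument is identical.
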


\begin{proof} Let $G' = G-(M\cup M')$. Note that every vertex $v\in V(M\cup M')$ has $d_{G'}(v) \le \D-1$.  By the maximality of $M'$, $G-V(M\cup M')$ contains
no fully $G$-saturated edges. So, $G'$ does not have a fully $G$-saturated edge. By the observation of graphs with chromatic index $\D + \mu$ and Lemma \ref{MD}$(a)$,  $\chi'(G') \le \D + \mu -1$, since otherwise $\D(G')=\D$ and there exist at least two fully $G$-saturated edges in one multi-fan centered at a $\D$-vertex, a contradiction.
\end{proof}

Lemma \ref{lem:BF-gen} has the following consequence.
\begin{COR}\label{cor:BF-gen}
Let $G$ be a graph.  If $M$ is a matching such that every edge in $M$ is fully $G$-saturated and $M$ is maximal subject to this property,  then $\chi'(G-M) \le\D(G) + \mu(G) -1$.
\end{COR}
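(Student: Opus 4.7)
The plan is to derive this corollary as an immediate specialization of Lemma \ref{lem:BF-gen}. The lemma takes as input two vertex-disjoint matchings, usually named $M$ and $M'$, where $M'$ consists of fully $G$-saturated edges and is maximal subject to that property; it then concludes $\chi'(G - (M\cup M')) \le \Delta(G)+\mu(G)-1$. The corollary is the special case obtained when one of the two matchings is empty, so essentially no new argument is required.

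Concretely, I would apply Lemma \ref{lem:BF-gen} with the empty matching playing the role of the lemma's $M$, and the matching denoted $M$ in the corollary playing the role of the lemma's $M'$. The empty matching is trivially vertex-disjoint from $M$, and by hypothesis $M$ is a matching of fully $G$-saturated edges that is maximal with respect to this property, so all hypotheses of Lemma \ref{lem:BF-gen} are in force. The conclusion then reads $\chi'(G - (\emptyset\cup M)) = \chi'(G - M)\le \Delta(G)+\mu(G)-1$, which is exactly the corollary.

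Because this is a one-line specialization, there is no genuine obstacle to overcome here: the real work — using Lemma \ref{MD}$(a)$ together with the observation that any graph with $\chi'=\Delta+\mu$ contains a fully $G$-saturated edge in every multi-fan centered at a $\Delta$-vertex — was already carried out in the proof of Lemma \ref{lem:BF-gen}. The only thing worth flagging is the slight clash of notation between the lemma and the corollary (the symbol $M$ means different things in each), which is why it is important to be explicit that the corollary's $M$ matches the lemma's $M'$, not the lemma's $M$.
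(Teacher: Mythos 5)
Your proposal is correct and coincides with what the paper intends: the corollary is stated as an immediate "consequence" of Lemma~\ref{lem:BF-gen}, obtained exactly by taking the lemma's first matching to be empty and the lemma's $M'$ to be the corollary's $M$. Your explicit note about the notational clash (the corollary's $M$ plays the role of the lemma's $M'$) is the only thing that needs saying, and you said it.
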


We strengthen Lemma \ref{lem:BF-gen} for multigraphs $G$ with $\mu(G) \ge 2$  as follows.
\begin{LEM}\label{lem:BF-gen+}
For a fixed matching  $M$ of a graph $G$, if $\mu(G) \ge 2$ and $\chi'(G-M)=\D(G)+\mu(G)$, then there exists a matching $M^*$ of $G-V(M)$ such that $\chi'(G-(M\cup M^*)) = \D(G)+\mu(G)-1=:k$ and every edge $e\in M^*$ is $k$-critical and fully $G$-saturated in the graph $H_e+e$, where $H_e$ is the unique maximal $k$-dense subgraph of $G-(M\cup M^*)$ containing $V(e)$.
\end{LEM}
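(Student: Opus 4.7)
The plan is to construct $M^*$ by an iterative greedy procedure followed by a minimality pruning. Set $M^*_0:=\emptyset$ and $G_i:=G-(M\cup M^*_i)$. While $\chi'(G_i)=k+1$, add one edge to $M^*_i$ as follows. The inequalities $\Delta+\mu=\chi'(G_i)\le\Delta(G_i)+\mu(G_i)\le\Delta+\mu$ force $\Delta(G_i)=\Delta$ and $\mu(G_i)=\mu$. Choose any $k$-critical edge $f\in G_i$; since $k+1\ge\Delta+2$ (from $\mu\ge 2$), Lemma~\ref{lem:elementarty-k-dense}(a) supplies a unique maximal $k$-dense subgraph $H$ of $G_i-f$ containing $V(f)$, and Lemma~\ref{lem:elementarty-k-dense}(c) gives $\chi'(H+f)=\Delta+\mu$ with $\Delta(H+f)=\Delta$ and $\mu(H+f)=\mu$. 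By the observation recorded just before Lemma~\ref{lem:BF-gen}, $H+f$ contains a fully $(H+f)$-saturated edge $e^*$, which is automatically fully $G$-saturated because the two maxima coincide with those of $G$. Any $v\in V(e^*)\cap V(M\cup M^*_i)$ would satisfy $d_{G_i}(v)\le d_G(v)-1=\Delta-1<\Delta=d_{H+f}(v)$, contradicting $H+f\subseteq G_i$; hence $V(e^*)\cap V(M\cup M^*_i)=\emptyset$, and $M^*_{i+1}:=M^*_i\cup\{e^*\}$ is still a matching of $G-V(M)$.

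The loop terminates in finitely many steps. At termination $\chi'(G_t)\le k$, while the $k$-dense subgraph $H+f-e^*\subseteq G_{i+1}$ of the last iteration has chromatic index $k$ (Lemma~\ref{k-dense subgraph}), so $\chi'(G_t)\ge k$, yielding $\chi'(G_t)=k$. Next, prune $M^*_t$ to a subset $M^*$ minimal with respect to $\chi'(G-(M\cup M^*))=k$, and write $G^*:=G-(M\cup M^*)$. Each surviving $e\in M^*$ is $k$-critical in $G^*+e$ by minimality; Lemma~\ref{lem:elementarty-k-dense}(a,c) then produces the unique maximal $k$-dense subgraph $H_e\subseteq G^*$ with $V(e)\subseteq V(H_e)$, $\chi'(H_e+e)=k+1$, $\Delta(H_e+e)=\Delta$, $\mu(H_e+e)=\mu$, and confirms that $e$ is $k$-critical in $H_e+e$.

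The remaining and hardest step is verifying that $e$ is fully $G$-saturated in $H_e+e$. For each $v\in V(e)$, since $v\notin V(M\cup(M^*\setminus\{e\}))$, one obtains $d_{H_e+e}(v)=d_G(v)-|\partial_{G^*}(H_e)\,\text{at}\,v|$ and $e_{H_e+e}(x,y)=e_G(x,y)$; the condition therefore reduces to $d_G(v)=\Delta$, $e_G(x,y)=\mu$ (both automatic from the iteration's choice of $e$), and the vanishing of boundary edges of $H_e$ at each endpoint of $e$. The main obstacle is this last vanishing: at the iteration $i$ where $e$ was selected, $d_{H^{(i)}+f_i}(v)=\Delta=d_G(v)$ forced every $G$-edge incident to $v$ to lie inside $V(H^{(i)})$, so it suffices to show $V(H^{(i)})\subseteq V(H_e)$. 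This is obtained by tracking the pocket across subsequent iterations: at each step $j>i$, Lemma~\ref{MDS} applied inside $G_{j-1}-f_j$ forces maximal $k$-dense subgraphs to be pairwise vertex-disjoint, so either $V(e^*_j)\cap V(H^{(i)})=\emptyset$ (the pocket is untouched and $H^\sharp_i:=H^{(i)}+f_i-e^*_i$ survives) or the pocket is absorbed into the maximal $k$-dense subgraph containing $V(f_j)$ (a swap of $f_j$ for $e^*_j$ within the pocket preserves a $k$-dense subgraph on the same vertex set $V(H^{(i)})$). Iterating through the construction and the pruning produces a $k$-dense subgraph of $G^*$ with vertex set $V(H^{(i)})$, so $V(H^{(i)})\subseteq V(H_e)$, every $G^*$-neighbor of $v$ lies in $V(H_e)$, no boundary edge at $v$ survives, and $e$ is fully $G$-saturated in $H_e+e$ as required.
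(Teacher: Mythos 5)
Your construction is genuinely different from the paper's. The paper starts from a \emph{maximal} matching of fully $G$-saturated edges in $G-V(M)$, applies Lemma~\ref{lem:BF-gen} to get chromatic index $k$, prunes to minimality, and then fixes any non-saturated $e\in M^*$ by a local \emph{replacement} argument inside $H_e+e$: Lemma~\ref{MD}$(a)$ yields a $\Delta$-neighbor $x_1$ of $x$ in a multi-fan, Lemma~\ref{MF}$(a)$ makes the edge $xx_1$ $k$-critical, another application of Lemma~\ref{MD}$(a)$ produces a fully $G$-saturated edge $e'$ with $V(e')\subseteq V(H_e)$, and then $M^*$ is updated by swapping $e$ for $e'$; the key is that the swap keeps $V(H_e)$ fixed and does not disturb the status of the other $f\in M^*$. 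You instead build $M^*$ greedily one edge at a time (always choosing the new edge inside the $k$-dense pocket of some $k$-critical edge), prune, and then argue saturation by \emph{tracking pockets} forward through the iterations. The front end of your proof is fine (the termination argument works via the density bound $\Gamma(G_t)\ge k$ coming from the last pocket — note you should not cite Lemma~\ref{k-dense subgraph} here, since that lemma assumes $\chi'(G_t)=k$, the very thing being established; use the density inequality directly).

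However, the tracking step contains a real gap. Your case split is: either $V(e^*_j)\cap V(H^{(i)})=\emptyset$ (pocket untouched), or the pocket ``is absorbed'' into $H^{(j)}$. The second branch is only justified when the current pocket $\hat H$ is still a $k$-dense subgraph of $G_{j-1}-f_j$, so that Lemma~\ref{MDS} forces $V(\hat H)\subseteq V(H^{(j)})$; that requires $f_j\notin E(\hat H)$. You give no argument for the remaining case $f_j\in E(\hat H)$, and it does not follow from what you wrote: in that case $\hat H - f_j$ has $(|V(\hat H)|-1)k/2-1$ edges, so it is \emph{not} $k$-dense in $G_{j-1}-f_j$ and Lemma~\ref{MDS} says nothing about it. It is then entirely possible that the unique maximal $k$-dense subgraph $H^{(j)}$ of $G_{j-1}-f_j$ containing $V(f_j)$ is a \emph{proper} subset of $V(\hat H)$, i.e.\ the pocket can shrink and stop covering $N_G(x)\cup N_G(y)$. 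The ``swap of $f_j$ for $e^*_j$'' you invoke is not even well-defined when $f_j$ already lies in the pocket or when $V(e^*_j)\not\subseteq V(\hat H)$. This is precisely the difficulty the paper sidesteps by working backwards (a local replacement after pruning) rather than forwards; to rescue your route you would need either an additional argument showing $f_j\notin E(\hat H)$ always holds, or a separate treatment of that case, or to abandon tracking in favor of the paper's replacement step.
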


\begin{proof}
Let $M^*$ be a matching of $G-V(M)$ consisting of fully $G$-saturated edges. We further choose $M^*$ such that $M^*$ is maximal. By Lemma \ref{lem:BF-gen}, $\chi'(G-(M\cup M^*))= k$. If there exists $e\in M^*$ such that $\chi'(G-(M\cup M^*\backslash \{e\}))=k$, we remove $e$ out of $M^*$. Thus we may assume that for each $e\in M^*$, $\chi'(G-(M\cup M^*\backslash \{e\}))=k+1$, i.e., each $e$ is a $k$-critical edge of  $G-(M\cup M^*\backslash \{e\})$. By Lemma \ref{lem:elementarty-k-dense}$(a)$, there exists a unique maximal $k$-dense subgraph $H_e$ of $G-(M\cup M^*)$ such that $V(e)\subseteq V(H_e)$ and $e$ is  also a $k$-critical edge of $H_e+e$. Notice that $\Delta(H_e+e)=\Delta$ and $\mu(H_e+e)=\mu$ by Lemma \ref{lem:elementarty-k-dense}$(c)$. It is now only  left to show that each $e\in M^*$ is full $G$-saturated in the graph $H_e+e$.
Suppose on the contrary that there exists $e\in M^*$ such that $e$ is not fully $G$-saturated in $H_e+e$.

Since $e$ is a $k$-critical edge of $G-(M\cup M^*\backslash \{e\})$, we let $\varphi \in \CC^k(G-(M\cup M^*))$.  By Lemma~\ref{k-dense subgraph}, $H_e$ is $\phiv_{H_e}$-elementary and
strongly  $\varphi$-closed.
Let $V(e) = \{x, y\}$ and $F_x$ be a maximum multi-fan at $x$ with respect to $e$ and $\phiv_{H_e}$.
By Lemma \ref{MD}$(a)$, $x$ has a $\Delta$-neighbor, say $x_1$, in $H_e$ from $V(F_x)\backslash\{x,y\}$. By Lemma \ref{MF}$(a)$, the edge $e_{xx_1}\in E_G(x,x_1)$ in $F_x$ is also a $k$-critical edge of  $H_e +e$. By Lemma \ref{MD}$(a)$ again, in a maximum multi-fan at $x_1$ there exists  a fully $G$-saturated edge $e'$.
Let $M' = (M^*\backslash \{e\})\cup \{e'\}$.  Since every vertex of $V(M\cup M^*)$ has degree less than $\D$ in $G-(M\cup M^*)$,  it follows that $M\cup M'$ is a matching of $G$.
Let $H_{e'} =H_e+e-e'$.  Clearly, $H_{e'}$ is also $k$-dense.  Applying Lemma \ref{MF}$(a)$, we see that  $e'$ is also a $k$-critical edge of $H_e +e$. Thus
$\chi'(H_{e'})= k$ and $H_{e'}$ is also an induced subgraph of $G- (M\cup M')$ by Lemma \ref{k-dense subgraph}. Moreover,  $H_{e'}$ is  a  maximal $k$-dense subgraph of $G- (M\cup M')$, since otherwise there exists a $k$-dense subgraph $H'$ containing $H_{e'}$ as a proper subgraph which implies that the  $k$-dense subgraph $H'+e'-e$ is also a $k$-dense subgraph containing $H_e$ as a proper subgraph in $G-(M\cup M^*)$, a contradiction to the maximality of $H_e$. As $H_e$ is strongly $\varphi$-closed,  colors on edges of $\partial_{G-(M\cup M')}(H_{e'})=\partial_{G-(M\cup M^*)}(H_e)$ are pairwise distinct. Applying Lemma \ref{lem-consisting}$(a)$ on any $k$-edge-coloring of $H_{e'}$ and the $k$-edge-coloring of $G-(M\cup M'\cup E(H_{e'}))$,
we have $\chi'(G- (M\cup M')) = k$.
In order to claim that we can replace $e$ by $e'$
in $M^*$, and so repeat the same process for every edge  $f$ of $M^*$ that is not fully $G$-saturated in $H_f+f$ ($H_f$ is the maximal $k$-dense subgraph  of $G-(M\cup M^*)$ with $V(f)\subseteq V(H_f)$,  we discuss that this replacement will not affect the properties of other edges
in $M^*$ as follows.

By Lemmas~\ref{MDS} and \ref{k-dense subgraph},
maximal $k$-dense subgraphs
of $G-(M\cup M^*)$ are induced  and vertex-disjoint.
Thus for any $f\in M^*\backslash \{e\}$, either $V(H_f)\cap V(H_{e})=\emptyset$ or $H_f = H_{e}$.  If $V(H_f)\cap V(H_{e})=\emptyset$,  then $H_f$  is still  the induced maximal $k$-dense subgraph of  $G-(M\cup M')$ containing $V(f)$ and $f$ is $k$-critical in $H_f+f$. If $H_f=H_e$,
then as $H_{e'}$ is an induced maximal $k$-dense subgraph of  $G-(M\cup M')$ with $V(H_e)=V(H_{e'})$, it follows that $H_f+e-e'=H_{e'}$ is
the maximal $k$-dense subgraph of $G-(M\cup M')$   containing $V(f)$ and $f$ is $k$-critical in  $H_f+e-e'+f$  by Lemma~\ref{lem:elementarty-k-dense}(a).
As $V(f)\cap V(e)=\emptyset$ and $V(f)\cap V(e')=\emptyset$,
the property that whether or not $f$ is   fully $G$-saturated in  $H_f+f$ is not changed after replacing $e$ by $e'$ in $M^*$.
Therefore, by repeating the replacement process as for the edge $e$  above for every edge  $f$ of $M^*$ that is not fully $G$-saturated in $H_f+f$, we may assume that each edge $e\in M^*$  is  fully $G$-saturated in $H_e+e$. The proof is completed.
\end{proof}

\section{Proof of Theorem~\ref{thm:main}}

We rewrite Theorem \ref{thm:main} as follows.
\setcounter{section}{1}
\begin{Theorem}
Let $G$ be a multigraph with  $\mu(G) \ge 2$. Using palette $[\D(G) +\mu(G)]$, any precoloring on a distance-$3$ matching $M$ in $G$ can be extended to a proper edge coloring of  $G$.
\end{Theorem}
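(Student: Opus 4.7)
The plan is to split into two cases according to the value of $\chi'(G-M)$.

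\textbf{Case 1.} Assume $\chi'(G-M)\le k:=\D+\mu-1$. Fix a coloring $\varphi\in\CC^k(G-M)$, regarded as using colors from the palette $[\D+\mu]$, so some color $c_0$ is entirely unused. For each precolored edge $e=xy\in M$ with color $c_e$, at most one edge $f_x$ at $x$ and at most one edge $f_y$ at $y$ is colored $c_e$. Recolor $f_x$ and $f_y$ (as a single edge if they coincide) with $c_0$; this is proper because $c_0$ is missing at every vertex. Because $M$ is a distance-$3$ matching, the closed $1$-neighborhoods in $G$ of distinct $M$-edges are pairwise vertex-disjoint, so recolorings made for different edges of $M$ do not interfere, and $c_0$ stays missing at every vertex of $V(M)$ at the moment its edge is processed. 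Assigning each $e\in M$ its color $c_e$ then completes the extension.

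\textbf{Case 2.} Assume $\chi'(G-M)=\D+\mu$. Apply Lemma~\ref{lem:BF-gen+} to obtain a matching $M^*$ of $G-V(M)$ with $\chi'(G-(M\cup M^*))=k$, such that every $e^*\in M^*$ is $k$-critical and fully $G$-saturated in $H_{e^*}+e^*$, where $H_{e^*}$ is the unique maximal $k$-dense subgraph of $G-(M\cup M^*)$ containing $V(e^*)$. By Lemmas~\ref{MDS},~\ref{k-dense subgraph}, and~\ref{lem:elementarty-k-dense}, the subgraphs $H_{e^*}$ are pairwise vertex-disjoint and induced, and under any $\varphi_0\in\CC^k(G-(M\cup M^*))$ each $H_{e^*}$ is $\varphi_0$-elementary, strongly $\varphi_0$-closed, and of diameter at most $2$. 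Fix such a $\varphi_0$ and apply Case~$1$'s method to $G-M^*$ (valid because $\chi'((G-M^*)-M)=\chi'(G-(M\cup M^*))=k$) to produce a proper coloring $\varphi_1$ of $G-M^*$ from $[\D+\mu]$ that agrees with the precoloring on $M$. Finally, for each $e^*\in M^*$, fix a $(k+1)$-edge-coloring $\psi$ of $H_{e^*}+e^*$ (available since $\chi'(H_{e^*}+e^*)=k+1$) and use Lemma~\ref{lem-consisting} together with multi-fan rotations from Lemmas~\ref{MF} and~\ref{MD} to rename the color classes of $\psi$ so that the resulting coloring glues to $\varphi_1$ across $\partial_G(H_{e^*})$ and properly colors $e^*$. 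The $H_{e^*}$'s being vertex-disjoint makes these insertions independent.

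The main obstacle will be the last step of Case~$2$: the recolorings producing $\varphi_1$ from $\varphi_0$ can place the spare color $c_0$ on edges of $\partial_G(H_{e^*})$, thereby destroying the strong $\varphi_0$-closure of $H_{e^*}$ that makes Lemma~\ref{lem-consisting} directly applicable. The technical heart of the argument will be to track these interactions using the disjointness $V(M)\cap V(M^*)=\emptyset$, the bound $\diam(H_{e^*})\le 2$, the full $G$-saturation at $V(e^*)$, and the distance-$3$ condition on $M$, and to supplement the renaming by additional multi-fan Kempe exchanges inside $H_{e^*}+e^*$ whenever the compatibility required by Lemma~\ref{lem-consisting} fails to hold on the nose.
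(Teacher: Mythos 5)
Your Case 1 is essentially the paper's treatment of the easy case ($\chi'(G-M)\le k:=\Delta+\mu-1$): recolor with the spare color the at-most-one conflicting edge at each endpoint of each $M$-edge, and the distance-$3$ condition keeps these recolorings pairwise independent. That part is fine.

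Case 2 is where the real work is, and your proposal has a genuine gap rather than a different complete route. Two concrete problems. First, Lemma~\ref{lem-consisting} is about a \emph{$k$-dense} subgraph $H$ with a \emph{$k$-edge-coloring} of $H$ being glued to a $k$-edge-coloring of $G-E(H)$; you invoke it for a $(k+1)$-edge-coloring of $H_{e^*}+e^*$ glued to a $(k+1)$-edge-coloring $\varphi_1$ of $G-M^*$. The hypotheses simply do not match: $H_{e^*}+e^*$ is not $k$-dense and the coloring is not a $k$-coloring, so the ``just rename color classes of $\psi$'' mechanism is not available. Second, and more fundamentally, you fix $M^*$ once and then try to repair each $H_{e^*}$ by recoloring inside $H_{e^*}+e^*$. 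But the central obstruction in this case is that an $i$-edge $e_{yu}$ (with $i=\Phi(f_{uv})$, $f_{uv}\in M$, $u,y\in V(H_{e^*})$) is simultaneously forced not to be $i$-colored (because of $f_{uv}$) and is constrained at $y$ by the uncolored edge $e^*=e_{xy}$. The paper resolves this by \emph{replacing} $e_{xy}$ in $M^*$ with a different edge of $H_{e^*}$ incident to a $\Delta$-vertex found via a multi-fan at $x$ (Operations I, II, III), thereby relocating the uncolored edge away from $y$; this swap is essential in Subcases 1.1, 1.2, 3.1--3.3, and there is no indication that the problem can always be fixed by Kempe exchanges inside $H_{e^*}+e^*$ with $M^*$ held fixed. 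You gesture at ``supplementing the renaming by additional multi-fan Kempe exchanges,'' but that deferred ``technical heart'' is exactly where the paper's entire T1/T2-improper classification, the prefeasible/feasible triple bookkeeping, and the Cases 1--3 subcase analysis live; without it, the proof does not close. In short: your setup correctly assembles the right ingredients (Lemma~\ref{lem:BF-gen+}, maximal $k$-dense subgraphs, $\diam\le2$, the gluing lemma), but omits the mechanism that actually eliminates the conflicts, and the one tool you do name is cited outside its hypotheses.
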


\setcounter{section}{4}

\begin{proof}
Let $k=\Delta+\mu-1$
 and  $\Phi: M\rightarrow [\Delta+\mu]$ be a given precoloring on $M$. Note that $\chi'(G-M)\le k+1$ by Vizing's Theorem.
The conclusion of Theorem~\ref{thm:main} holds easily if $\chi'(G-M) \le k$ with the reason as follows.
For any $k$-edge-coloring  $\psi$ of $G-M$,
if there exists $e\in E(G-M)$ such that $e$  is adjacent in $G$ to an edge $f\in M$
and $\psi(e)=\Phi(f)$, we recolor each such $e$ with the color $\Delta+\mu$ and  get a new coloring $\psi'$ of $G-M$.
Under  $\psi'$,
the edges colored by $\Delta+\mu$ form a matching in $G$ since  $M$ is a distance-$3$ matching. Thus the combination of $\Phi$ and $\psi'$ is a
$(k+1)$-edge-coloring of $G$. Therefore, in the remainder of the proof, we assume $\chi'(G-M)=k+1$.

Let $M_{\Delta+\mu}$ be the set of edges precolored with $\Delta+\mu$ in $M$ under $\Phi$. For  any
matching $M^*\subseteq G-V(M)$   and  any  $(k+1)$-edge-coloring or $k$-edge-coloring   $\varphi$ of $G-(M\cup M^*)$,
denote the $\Delta+\mu$ color class of   $\varphi$ by $E^\varphi_{M^*}$. In particular, $E^\varphi_{M^*}=\emptyset$ if $\varphi$ is a $k$-edge-coloring.
We introduce the following notation.
For  $f\in E_G(u,v) \cap M$, if there exists $f_1\in E(G-(M\cup M^*))$ such that
$V(f_1)\cap V(f)=\{u\}$ and
$\varphi(f_1)=\Phi(f)$,  we call $f$ {\bf T1-improper }(Type 1 improper) at $u$
if $V(f_1)\cap V(M^*)=\emptyset$, and {\bf T2-improper }(Type 2 improper) at $u$
if $V(f_1)\cap V(M^*) \ne \emptyset$.  If $f$ is T1-improper or T2-improper at $u$,
we say that $f$ is {\bf improper} at $u$.
Define
\begin{eqnarray*}
	E_1(M^*,\varphi)&=&\{ f_1\in E(G-(M\cup M^*)):  \text{$f_1$ is adjacent in $G$ to a T1-improper edge}\}, \\
	E_2(M^*,\varphi)&=&\{ f_1\in E(G-(M\cup M^*)):  \text{$f_1$ is adjacent in $G$ to a T2-improper edge}\}.
\end{eqnarray*}
Observe that $E_1(M^*,\varphi) \cup E_2(M^*,\varphi)$ is a matching since $M$ is a distance-$3$ matching in $G$.
We call the  triple $(M^*,E^\varphi_{M^*},\varphi)$  {\bf prefeasible}  if the following conditions are satisfied:
\begin{enumerate}[(a)]
	\item $M_{\Delta+\mu}\cup M^*\cup E^\varphi_{M^*}$ is a matching;
	\item for each  $e\in M^*$ such that $e$ is adjacent in $G$ to an edge of $E_2(M^*,\varphi)$, $e$ is $k$-critical and fully $G$-saturated in the graph $H_e+e$, where $H_e$ is the unique maximal $k$-dense subgraph of $G-(M\cup M^*)$ containing $V(e)$;
	\item  the colors on edges of $\partial_{G-(M\cup M^*)}(H_e)$ are all  distinct under $\varphi$.
\end{enumerate}

Let $(M^*,E^\varphi_{M^*},\varphi)$  be a prefeasible triple.
Since $M\cup M^*$ is a matching in $G$,
if  $(M^*,E^\varphi_{M^*},\varphi)$ also satisfies  {\it Condition $(d)$}: $|E_1(M^*,\varphi)|=|E_2(M^*,\varphi)|=0$,
then by assigning  the color $\Delta+\mu$ to all  edges of $M^*$, we obtain a (proper) $(k+1)$-edge-coloring  of $G$,   where the $(k+1)$-edge-coloring is  the combination of the  precoloring $\Phi$ on $M$,   the coloring  using the color $\Delta+\mu$ on $M^*$,  and the coloring $\varphi$ of $G-(M\cup M^*)$. Thus we define a  {\bf feasible} triple $(M^*,E^\varphi_{M^*},\varphi)$  as one that satisfies   Conditions $(a)$-$(d)$.

The rest of the proof is devoted to showing the existence of a feasible triple $(M^*,E^\varphi_{M^*},\varphi)$ of $G$.
Our main strategy is to first fix a particular prefeasible triple $(M^*_0,E^{\varphi_0}_{M^*_0},\varphi_0)$, then  modify it step by step into a feasible triple $(M^*,E^\varphi_{M^*},\varphi)$. In particular, we will choose $M^*_0$ and $\varphi_0$
such that $E_{M^*_0}^{\varphi_0}=\emptyset$. At the end,
when we modify $\varphi_0$ into $\varphi$, we will ensure that the $\Delta+\mu$ color class of $G$ is $M_{\Delta+\mu} \cup M^*\cup E_1(M_0^*,\varphi_0) \cup E_2(M_0^*,\varphi_0)$. The process is first to modify $M_0^*$ and $\varphi_0$ at the same time to deduce the number of T2-improper edges.

By Lemma \ref{lem:BF-gen+},  there exists a matching $M^*_0$ of $G-V(M)$ such that $\chi'(G-(M\cup M^*_0)) = k$ and each edge $e\in M^*_0$ is $k$-critical and fully $G$-saturated  in $H_e+e$, where $H_e$ is the unique maximal $k$-dense subgraph of $G-(M\cup M^*_0)$ containing $V(e)$. By Lemmas~\ref{MDS} and \ref{k-dense subgraph}, $H_e$ is induced in  $G-(M\cup M^*_0)$ with $\chi'(H_e)=k$, and $H_e$ and $H_{e'}$ are either identical or vertex-disjoint  for any $e'\in M_0^*\setminus \{e\}$.  Moreover, by Lemma \ref{lem:elementarty-k-dense},  $\diam (H_e+e)\le\diam (H_e)\le2$, and  $H_e$ is $(\varphi_0)_{H_e}$-elementary and strongly $\varphi_0$-closed in  $G-(M\cup M^*_0)$. As $\chi'(G-M)=k+1$, we  have   $|M^*_0|\ge 1$. Let $\varphi_0$ be a $k$-edge-coloring of  $G-(M\cup M^*_0)$.  Thus $E^{\varphi_0}_{M_0^*}=\emptyset$. Obviously,  the triple  $(M^*_0,\emptyset,\varphi_0)$ is prefeasible,  which we take  as our initial triple.

For $(M^*_0,\emptyset,\varphi_0)$,  if $|E_1(M_0^*,\varphi_0)|=|E_2(M_0^*,\varphi_0)|=0$,  then we are done. If $|E_1(M_0^*,\varphi_0)|\ge 1$ and $|E_2(M_0^*,\varphi_0)|=0$,  then we recolor each edge in $E_1(M_0^*,\varphi_0)$  with the color $\Delta+\mu$ to produce a $(k+1)$-edge-coloring $\varphi_1$ of $G-(M\cup M^*_0)$, since $E_1(M_0^*,\varphi_0)$ is a matching. Then  as $|E_1(M_0^*,\varphi_1)|=|E_2(M_0^*,\varphi_1)|=0$ and
$M_{\Delta+\mu}\cup M_0^*\cup E^{\varphi_1}_{M_0^*}$  is a matching, it follows that   the new  triple  $(M^*_0,E_1(M_0^*,\varphi_0),\varphi_1)$ is feasible. Then we are also done.
	
Therefore, we assume that $|E_1(M_0^*,\varphi_0)|\ge 0$ and $|E_2(M_0^*,\varphi_0)| \ge 1$.
Recall that	for each $e\in M_0^*$, $e$  is fully $G$-saturated in  $H_e+e$.
Thus we have the  following observation: for an edge $f_{uv}\in M$ with $V(f_{uv})=\{u,v\}$, if $\{u,v\} \cap V(H_e)=\emptyset$ for any $e\in M_0^*$, then $f_{uv}$ cannot be a T2-improper edge.

Since $|E_2(M_0^*,\varphi_0)| \ge 1$, we consider one T2-improper edge in $M$, say $f_{uv}$ with $V(f_{uv})=\{u,v\}$.
Suppose  that $f_{uv}$ is T2-improper at $u$ and $\Phi(f_{uv})=i\in [k]$ (as $\varphi_0$ is a $k$-edge-coloring, $i\ne k+1=\Delta+\mu$). Then there exist
$e_{xy} \in E_G(x,y)\cap M^*_0$ and a maximal $k$-dense subgraph $H$ of $G-(M\cup M^*_0)$ such that $V(e_{xy}) \subseteq V(H)$ and $f_{uv}$ and $e_{xy}$ are both adjacent  in $G$ to an $i$-edge $e_{yu} \in E_{H}(y,u)$.  Since $M$ is a distance-$3$ matching and $\diam(H)\le 2$,  we have $ V(H)\cap V(M\setminus\{f_{uv}\})=\emptyset$.
We will modify $\varphi_0$
into a new coloring such that $f_{uv}$ is not  T2-improper at $u$ under this new coloring  and that no other edge of $M_0^*$ is changed into a new T2-improper edge.
We  consider the  three  cases below regarding the location of $f_{uv}$ with respect to $H$.

	\noindent{\bf Case 1:}  $f_{uv}$ is not improper at $v$, or $f_{uv}$ is T1-improper  at $v$ but $v\notin V(H)$.

Let $F_x$ be a maximal multi-fan  at $x$ with respect to $e_{xy}$ and $(\varphi_0)_{H}$ in $H+e_{xy}$.
There exist at least one $\Delta$-vertex in $V(F_x)\setminus\{x,y\}$ by Lemma \ref{MD}$(a)$ and a linear sequence at $x$ from $y$ to this $\Delta$-vertex in $F_x$.
We consider two subcases as follows.

	\begin{figure}[!ht]
		\begin{center}
			\centering
			\scalebox{0.32}{\includegraphics{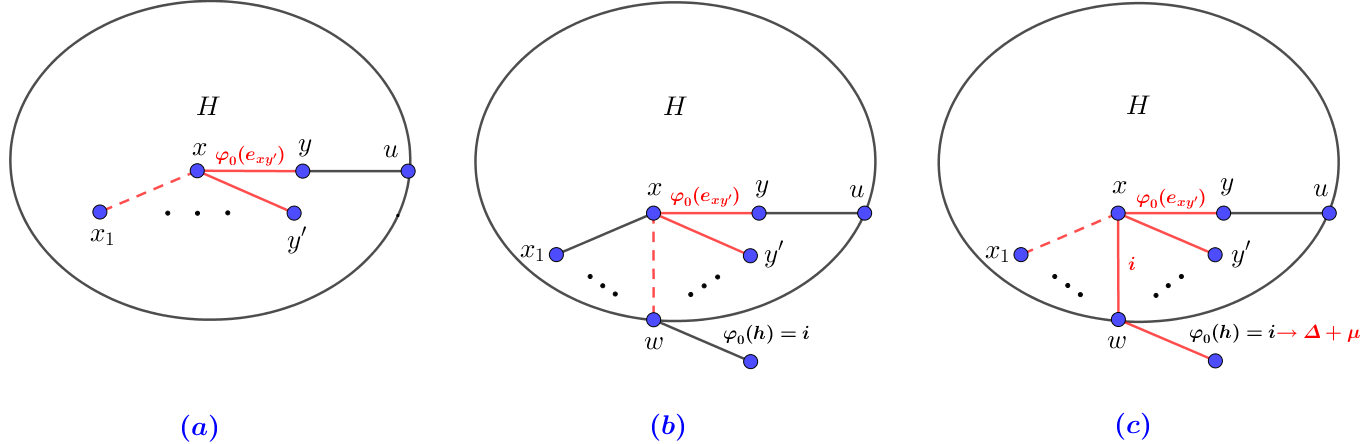}}
			\caption{Operations I, II and III in Case $1$. (The edges of the dashed line represent uncolored edges.)}
		\end{center}
	\end{figure}

	{\bf Subcase 1.1:}	$V(F_x)\setminus\{x,y\}$ has a $\Delta$-vertex $x_1$ and there is a
 linear sequence	$S$ at $x$ from $y$ to $x_1$ such that $S$ contains no $i$-edge or $S$ contains no vertex $w$ such that $w$ is incident with an $i$-edge of  $\partial_{G-(M\cup M_0^*)}(H)$.

Let $S=(y, e_{xy'}, y', \ldots, e_{xx_1}, x_1)$ be the linear sequence (where $y'=x_1$ is possible).
We apply Operation I as
follows: apply  a shifting in $S$ from $y$ to  $x_1$, color $e_{xy}$ with $\varphi_0(e_{xy'})$, uncolor $e_{xx_1}$, and replace $e_{xy}$ by $e_{xx_1}$ in $M^*_0$. See Figure $1(a)$.
Since $x_1$ is not incident with any edge in $M\cup M^*_0$, $M^*_1:=(M^*_0\backslash \{e_{xy}\})\cup \{e_{xx_1}\}$ is  a matching.  Denote  $H_1:=H+e_{xy}-e_{xx_1}$. Let $\psi$ be the $k$-edge coloring of  $H_1$    after Operation I.
Note that for any vertex $z\in V(H_1)$ that is incident with an edge of $\partial_{G-(M\cup M_1^*)}(H_1)$, if $\overline{\psi}(z) \ne \overline{(\varphi_0)}_{H}(z)$, then $z\in V(S)$.
By the condition of Subcase 1.1 and Operation I, there is no such vertex $w$ such that
$w$ is incident with both an $i$-edge of $E(S)$ and an $i$-edge of $\partial_{G-(M\cup M_1^*)}(H_1)$. Thus we can rename some color classes of $\psi$ but keep the color $i$ unchanged to match all  colors on   edges of $\partial_{G-(M\cup M_1^*)}(H_1)$. In this way we obtain a (proper) $k$-edge-coloring $\varphi_1$ of $G-(M\cup M^*_1)$
by Lemma \ref{lem-consisting}$(b)$.

We claim that $(M^*_1,\emptyset,\varphi_1)$  is a prefeasible triple.
As $M_{\Delta+\mu} \cup M_1^*$ is a matching, we verify that $M_1^*$ and $\varphi_1$ satisfy the corresponding conditions.
Clearly  $H_1$ is $k$-dense with  $V(H_1)=V(H)$ and $\partial_{G-(M\cup M_1^*)}(H_1)=\partial_{G-(M\cup M_0^*)}(H)$ and $\chi'(H_1)=\chi'(H)=k$, and $e_{xx_1}$
is $k$-critical and fully $G$-saturated in $H_1+e_{xx_1}$.   Furthermore, as distinct maximal $k$-dense subgraphs are vertex-disjoint
we know that each edge  $e\in M_1^*\setminus \{e_{xx_1}\}$
is still contained in a $k$-dense subgraph of $G-(M\cup M_1^*)$ such that  $e$ is $k$-critical and fully $G$-saturated in the graph $H_e+e$ if $e$ is adjacent in $G$ to an edge of $E_2(M^*_1,\varphi_1)$, where $H_e$ is the unique maximal $k$-dense subgraph of $G-(M\cup M_0^*)$ containing $V(e)$ if $H_e$ and $H_1$ are vertex-disjoint, and $H_e=H_1$ otherwise.
Since $\varphi_1$ is a $k$-edge-coloring of $G-(M\cup M_1^*)$, $H_e$ is strongly $\varphi_1$-closed  for each $e\in M_1^*$.  Therefore, $(M^*_1,\emptyset,\varphi_1)$  is a prefeasible triple.

Next, we claim that $|E_2(M_1^*,\varphi_1)|= |E_2(M_0^*,\varphi_0)|-1$.
Note that under $\varphi_1$, we still have $\varphi_1(e_{yu})=i$. Since $e_{xy}, e_{yu} \in E(H_1)$, $e_{xx_1}\in M^*_1$ and $e_{xx_1}$
is not adjacent to $e_{yu}$ in $G-(M\cup M_1^*)$,
we see that now $f_{uv}$ is no longer  T2-improper at $u$  but    T1-improper  at $u$ with respect to $M_1^*$ and $\varphi_1$.
For any edge $f\in M\setminus \{f_{uv}\}$, since
both $x$ and $x_1$ are $\Delta$-vertices of $H+e_{xy}$ and $V(H_1)\cap V(M\setminus\{f_{uv}\})=\emptyset$,
we see that the distance between $f$ and $e_{xx_1}$ in $G-(M\cup M_1^*)$ is at least $2$.  Thus the property of $f$ being T1-improper or
T2-improper is not changed under $M_1^*$ and $\varphi_1$. Thus
the new triple $(M^*_1,\emptyset,\varphi_1)$  is  prefeasible with $|E_1(M_1^*,\varphi_1)|=|E_1(M_0^*,\varphi_0)|+1$ and $|E_2(M_1^*,\varphi_1)|= |E_2(M_0^*,\varphi_0)|-1$, and so we can consider $(M^*_1,\emptyset,\varphi_1)$ instead.

 {\bf Subcase 1.2:}	
For any $\Delta$-vertex in $V(F_x)\setminus\{x,y\}$,  any linear sequence from $y$ to this $\Delta$-vertex contains both  an $i$-edge $h_i$ and a vertex $w$ such that $w$ is incident with an $i$-edge  $h$ of  $\partial_{G-(M\cup M_0^*)}(H)$.

Let $F\subseteq F_x$ be the maximal multi-fan at $x$ without any $i$-edge with respect to $e_{xy}$ and $(\varphi_0)_{H}$. By the condition of Subcase 1.2, $F$ does not contain any $\Delta$-vertex from $V(F)\backslash \{x,y\}$ in $H$. By Lemma \ref{MD}$(c)$, there exists a vertex $z^*\in V(F)\backslash \{x,y\}$ with $i\in \overline{(\varphi_0)}_{H}(z^*)$ and $d_{H}(z^*)=\Delta-1$. Since $V(F_x)$ is $(\varphi_0)_{H}$-elementary by Lemma \ref{MF}$(a)$ and every color on edges of $\partial_{G-(M\cup M_0^*)}(H)$ under $\varphi_0$ is a missing color at some vertex of $H$ under $(\varphi_0)_{H}$, it follows that $z^*=w$, i.e., $d_{H}(w)=\Delta-1$ and $d_{G-(M\cup M^*_0)}(w)=\Delta$.
Thus the $i$-edge $h$ is the only edge incident with $w$ from $\partial_{G-(M\cup M_0^*)}(H)$, and $w$ is not adjacent in $G$ to any edge from $M\cup M^*_0$. Let $S=(y, e_{xy'}, y', \ldots, e_{xx_1}, x_1)$ be a linear sequence at $x$ from $y$ to $x_1$.
Notice that  $w$ is in $S$ by the condition of Subcase 1.2. We consider the following two subcases  according whether the boundary $i$-edge $h$ belongs to $E_1(M_0^*,\varphi_0)$.
	
		{\bf Subcase 1.2.1:}		
	$h\notin E_1(M_0^*,\varphi_0)$, i.e., $h$ is not adjacent in $G$ to any precolored $i$-edge in $M$.
	
Let $e_{xw}\in E_{H}(x,w)$ be an edge in $S$.
We apply Operation II  as follows:
apply  a shifting in $S$ from $y$ to $w$, color $e_{xy}$ with $\varphi_0(e_{xy'})$,
uncolor $e_{xw}$, and replace $e_{xy}$ by $e_{xw}$ in $M^*_0$. See Figure $1(b)$.
Since $d_{G-(M\cup M^*_0)}(w)=\Delta$, $M^*_1:=(M^*_0\backslash \{e_{xy}\})\cup \{e_{xw}\}$ is a matching. Denote $H_1:=H+e_{xy}-e_{xw}$.
Let $\psi$ be the $k$-edge coloring of $H_1$  after Operation II. Note that for any vertex $z\in V(H_1)$ that is incident with an edge of $\partial_{G-(M\cup M_1^*)}(H_1)$, if $\overline{\psi}(z) \ne \overline{(\varphi_0)}_{H}(z)$, then $z$ is contained in the subsequence of $S$ from $y$ to $w$.
Since $h$ is the only $i$-edge of $\partial_{G-(M\cup M_1^*)}(H_1)$, there is no such vertex $w$ such that
$w$ is incident with both an $i$-edge contained in the subsequence of $S$ from $y$ to $w$  and an $i$-edge of $\partial_{G-(M\cup M_1^*)}(H_1)$ after Operation II.
Thus we can rename some color classes of $\psi$ but keep the color $i$ unchanged to match all colors on  boundary edges of $\partial_{G-(M\cup M_1^*)}(H_1)$. In this way we obtain a (proper) $k$-edge-coloring $\varphi_1$ of $G-(M\cup M^*_1)$
by Lemma \ref{lem-consisting}$(b)$.

By the similar  argument in the proof of Subcase 1.1,    it can be verified that $(M^*_1,\emptyset,\varphi_1)$ is prefeasible,
and that  $f_{uv}$ is no longer  T2-improper  at $u$  but  T1-improper  at $u$ with respect to $M_1^*$ and $\varphi_1$.
For any edge $f\in M\setminus \{f_{uv}\}$,  we see that the distance between  $f$ and $e_{xw}$  is at least 2 or just 1 when $h$ is adjacent  in $G$ to $f$ with $\Phi(f)\neq i$.  Thus the property of $f$ being T1-improper or
T2-improper is not changed under $M_1^*$ and $\varphi_1$. Thus
the new triple $(M^*_1,\emptyset,\varphi_1)$  is  prefeasible with $|E_1(M_1^*,\varphi_1)|=|E_1(M_0^*,\varphi_0)|+1$ and $|E_2(M_1^*,\varphi_1)|= |E_2(M_0^*,\varphi_0)|-1$, and so we can consider $(M^*_1,\emptyset,\varphi_1)$ instead.

	{\bf Subcase 1.2.2:}	
	$h\in E_1(M_0^*,\varphi_0)$, i.e.,	$h$ is adjacent in $G$ to some precolored $i$-edge $f_i$ in $M$.
	
We apply Operation III as follows:   recolor the $i$-edge $h$ with the color $\Delta+\mu$,
apply  a shifting in $S$ from $y$ to $x_1$, color $e_{xy}$ with $\varphi_0(e_{xy'})$,
uncolor  $e_{xx_1}$, and replace $e_{xy}$ by $e_{xx_1}$ in $M^*_0$. See Figure $1(c)$.
By  the same argument as  in the proof of Subcase 1.1,  we know that $M^*_1:=(M^*_0\backslash \{e_{xy}\})\cup \{e_{xx_1}\}$ is a matching. Denote $H_1:=H+e_{xy}-e_{xx_1}$.
Let $\psi$ be the $k$-edge coloring of $H_1$  after Operation III.
Note that there is no $i$-edge in $\partial_{G-(M\cup M_1^*)}(H_1)$ after Operation III.  By the similar argument as in the proof of Subcase 1.1,  we can rename some color classes of $\psi$ but keep the color $i$ unchanged to match all  colors on  edges of $\partial_{G-(M\cup M_1^*)}(H_1)$. In this way we obtain a (proper) $(k+1)$-edge-coloring $\varphi_1$ of $G-(M\cup M^*_1)$
by Lemma \ref{lem-consisting}$(b)$.

We claim that $(M^*_1,\emptyset,\varphi_1)$  is a prefeasible triple.  As $M\cup M_1^*$
is a matching and $h$ is adjacent to $f_i$ and $\Phi(f_i)=i\in [k]$,  it follows that
$h$ is not adjacent to any edge from $M_{\Delta+\mu}\cup M_1^*$, which implies that $M_{\Delta+\mu}\cup M_1^*\cup \{h\}$
is a matching.
By the same argument as in the proof of Subcase 1.1,  we know that $e_{xx_1}$
is $k$-critical and fully $G$-saturated in $H_1+e_{xx_1}$, and each edge  $e\in M_1^*\setminus \{e_{xx_1}\}$
is still contained in a $k$-dense subgraph of $G-(M\cup M_1^*)$ such that  $e$ is $k$-critical and fully $G$-saturated in the graph $H_e+e$ if $e$ is adjacent in $G$ to an edge of $E_2(M^*_1,\varphi_1)$, where $H_e$ is the unique maximal $k$-dense subgraph of $G-(M\cup M_0^*)$ containing $V(e)$ if $H_e$ and $H_1$ are vertex-disjoint, and $H_e=H_1$ otherwise.
If the color $\Delta+\mu$  is not used on  edges of $\partial_{G-(M\cup M_1^*)}(H_e)$,
then colors on edges of $\partial_{G-(M\cup M_1^*)}(H_e)$ are all distinct by the fact that
$H_e$ is strongly $\varphi_1$-closed.
If the  color $\Delta+\mu$  is  used on  edges of $\partial_{G-(M\cup M_1^*)}(H_e)$, then it was used on exactly one edge of $\partial_{G-(M\cup M_1^*)}(H_e)$. This, together with the fact that  $H_e$ is $(\varphi_1)_{H_e}$-elementary, implies that colors on edges of $\partial_{G-(M\cup M_1^*)}(H_e)$ are all distinct.
Therefore, $(M^*_1,\emptyset,\varphi_1)$  is a prefeasible triple.

By the same argument as in the proof of Subcase 1.1,
we know that now $f_{uv}$ is no longer  T2-improper at $u$  but   T1-improper  at $u$ with respect to $M_1^*$ and $\varphi_1$, and that for any edge $f\in M\setminus \{f_{uv}\}$,
the distance between  $f$ and $e_{xx_1}$ in $G-(M\cup M_1^*)$ is at least 2.   Except the $i$-edge $f_i$ of $M$
that is adjacent  in $G$ to $h$,
the property of $f$ being T1-improper or
T2-improper is not changed under $M_1^*$ and $\varphi_1$.  The edge $f_i$
is originally T1-improper at $w_i$, and now is no longer improper at $w_i$
with respect to $\varphi_1$, where we assume  $h\in E_G(w,w_i)$.
Thus   $|E_1(M_1^*,\varphi_1)|=|E_1(M_0^*,\varphi_0)|+1-1$ and $|E_2(M_1^*,\varphi_1)|= |E_2(M_0^*,\varphi_0)|-1$, and so we can consider $(M^*_1,\{h\},\varphi_1)$ instead.
Note that  assigning  the color $\Delta+\mu$ to $h$ will not affect the modification of $\varphi_0$ into $\varphi$ and $M_0^*$ into $M^*$,  since  $h\in E_1(M_0^*,\varphi_0)$ and we will assign the color  $\Delta+\mu$ to all edges in $E_1(M_0^*,\varphi_0)$ in the final process.

	\noindent{\bf Case 2:}	
	$f_{uv}$ is T2-improper at $v$ with $v\in V(H')$ for a maximal $k$-dense subgraph $H'$ other than $H$.
	
For this case, we apply the same operations as we did in Case 1 first with respect to the vertex $u$ in $H$ and then with respect to the vertex $v$ in $H'$.
Recall that $V(H)\cap V(H')=\emptyset$
and $E_1(M_0^*,\varphi_0)$ is a matching.  By Case $1$, the operations applied within $G[V(H)]$ or $G[V(H)]+h_u$ do not affect the
operations applied within $G[V(H')]$ or $G[V(H')]+h_v$, where $h_u$ and $h_v$ are the two possible $i$-edges with $h_u\in\partial_{G-(M\cup M_0^*)}(H)\cap E_1(M_0^*,\varphi_0)$ and $h_v\in\partial_{G-(M\cup M_0^*)}(H')\cap  E_1(M_0^*,\varphi_0)$. Furthermore,  if $h_u$ and $h_v$ exist at the same time, then $V(h_u)\cap V(h_v) =\emptyset$ and there is no maximal $k$-dense subgraph $H''$ other than $H$ and $H'$ such that $V(H'')\cap V(h_u)\neq\emptyset$ and  $V(H'')\cap V(h_v)\neq\emptyset$. Denote the matching resulting from $M_0^*$
by $M_1^*$, and the  coloring resulting from $\varphi_0$ by $\varphi_1$.
By Case $1$, $E_{M_1^*}^{\varphi_1}\subseteq\{h_u,h_v\}$, $M_{\Delta+\mu}\cup M_1^*\cup \{h_u,h_v\}$ is a matching,
and $(M^*_1, E_{M_1^*}^{\varphi_1},\varphi_1)$ also satisfies Conditions $(b)$ and $(c)$.  Thus  $(M^*_1, E_{M_1^*}^{\varphi_1},\varphi_1)$ is a prefeasible triple.
With respect to $M_1^*$
and $\varphi_1$, $f_{uv}$ is no longer T2-improper but is T1-improper at both $u$ and $v$. Furthermore, we have  $|E_1(M_1^*,\varphi_1)| \ge |E_1(M_0^*,\varphi_0)|$ and $|E_2(M_1^*,\varphi_1)|= |E_2(M_0^*,\varphi_0)|-2$. Thus we can consider $(M^*_1, E_{M_1^*}^{\varphi_1},\varphi_1)$ instead.

	\noindent{\bf Case 3:}	
$f_{uv}$ is T1-improper or T2-improper at $v$ with $ v\in V(H)$.

Assume first that $d_{H}(b)<\Delta$. Let $e_{bv}\in E_{H}(b,v)$ with $\varphi_0(e_{bv})=i$.
If $f_{uv}$ is T1-improper at $v$, then we apply the same operations with respect to $u$ as we did in Case $1$.
Denote the new matching resulting from $M_0^*$ by $M_1^*$, and the new  coloring resulting from $\varphi_0$ by $\varphi_1$. Then the vertex $b$ is not incident in $G$ with any edge of $M_1^*$ by Operations I-III in Case $1$. Thus $f_{uv}$ is no longer  T2-improper at $u$ but T1-improper at $u$ with respect to $M_1^*$ and $\varphi_1$. Furthermore, we have $|E_1(M_1^*,\varphi_1)| \ge  |E_1(M_0^*,\varphi_0)|$ and $|E_2(M_1^*,\varphi_1)|= |E_2(M_0^*,\varphi_0)|-1$. Thus we can consider $(M^*_1, E_{M_1^*}^{\varphi_1},\varphi_1)$ instead.

If $f_{uv}$ is T2-improper at $v$,
let $e_{ab}\in M_0^*$ with $V(e_{ab})=\{a,b\}$.
We apply the same operations with respect to $u$ as we did in Case $1$.
Denote the resulting matching  by $M_1^*$, and the resulting coloring by $\varphi_1$.  With respect to $M_1^*$
and $\varphi_1$, the edge $f_{uv}$ is still T2-improper at $v$ as $d_{H}(a)<\Delta$ and $d_{H}(b)<\Delta$. By Case $1$, now $f_{uv}$ is no longer T2-improper at $u$ but T1-improper at $u$ with respect to the prefeasible triple $(M^*_1, E_{M_1^*}^{\varphi_1},\varphi_1)$,
where $E_{M_1^*}^{\varphi_1}=\emptyset$ or $\{h\}$ with some vertex $w$ and its incident $i$-edge $h\in\partial_{G-(M\cup M_0^*)}(H)\cap E_1(M_0^*,\varphi_0)$.
Denote by $H_1$  the new $k$-dense subgraph after the operations with respect to $u$ in $H+e_{xy}$.	 In particular, the situation under $(M^*_1,\emptyset,\varphi_1)$ is actually the
same as the case $d_{H}(b)=\Delta$ in the previous paragraph since  now  $d_{H_1}(y)=\Delta$.

Thus we consider only the case that $f_{uv}$ is T2-improper at $v$, T1-improper at $u$ and $d_{H_1}(y)=\Delta$. Consider a maximal multi-fan $F_a$ at $a$ with respect to $e_{ab}$ and $(\varphi_1)_{H_1}$ in $H_1+e_{ab}$.  Clearly we can apply the same operations in Case $1$ for $v$ so that $f_{uv}$ is no longer  T2-improper  at $v$ with respect to the resulting matching $M_2^*$ and coloring $\varphi_2$, unless these operations would have to put one edge $e_{ay}\in E_{H_1}(a,y)$ into $M^*_2$. Then $f_{uv}$ would become T2-improper at $u$ again with respect to $M_2^*$ and $\varphi_2$.
The only operations that have to uncolor an edge of $H_1$ incident with $y$
are Operations I and III.
Therefore, we make the following two assumptions on $F_a$
in the rest of our proof.
		 \begin{enumerate}[(1)]
		 	\item $y$ is the only $\Delta$-vertex in $V(F_a)\backslash\{a,b\}$.
		 	\item  If a linear sequence in $F_a$ at $a$ from $b$ to $y$ contains a vertex $w'$ such that $d_{H_1}(w')=\Delta-1$ and $w'$ is incident with an  $i$-edge   $ h'\in \partial_{G-(M\cup M_1^*)}(H_1)$,  then $h'\in E_1(M_1^*, \varphi_1)$.
		 \end{enumerate}

Let $F_b$ be a maximal multi-fan at $b$ with respect to $e_{ab}$ and $(\varphi_1)_{H_1}$ in $H_1+e_{ab}$. We consider the following  three subcases.

	{\bf Subcase 3.1:} $F_{b}$ contains a linear sequence $S$ at $b$ from $a$ to $y$  such that $S$ does not contain any $i$-edge.
	
Let $S=(a, e_{ba'}, a',\ldots, e_{by},y)$ be the linear sequence (where $a'=y$ is possible).
We apply a shifting in $S$ from $a$ to $y$, color $e_{ab}$ with $\varphi_1(e_{ba'})$,
uncolor $e_{by}$. See Figure $2$(a)-(b). Note that $M_2^*:=(M_1^*\setminus \{e_{ab}\}) \cup \{e_{by}\}$ is a matching,
and $H_2:=H_1+e_{ab}-e_{by}$ is a $k$-dense subgraph of $G-(M\cup M_2^*)$.  As $S$ does not contain any $i$-edge, by Lemma \ref{lem-consisting}$(b)$, we obtain a $k$-edge-coloring $\varphi_2$ of $G-(M\cup M_2^*)$.
Note that  $f_{uv}$ is  T2-improper at both $u$ and $v$ with respect to $M_2^*$
and $\varphi_2$. However,  we have $\Phi(f_{uv})=i$,  $\varphi_2(e_{bv})=\varphi_2(e_{yu})=i$, and $e_{by} \in M_2^*$ ($bvuyb$ is a cycle with length $4$ in $G$).
By assigning the color $i$ to $e_{by}$ and recoloring  $e_{bv}$ and $e_{yu}$ with the color $\Delta+\mu$, we obtain a new matching $M^*_3:=M^*_2\backslash \{e_{by}\}=M^*_1\backslash \{e_{ab}\}$  of $G-V(M)$ and a new $(k+1)$-edge-coloring $\varphi_3$ of $G-(M\cup M^*_3)$. See Figure $2(c)$.
The edge $f_{uv}$ is now not improper at neither of its endvertices.
Note that $E_{M_3^*}^{\varphi_3}= \{e_{bv},e_{yu}\}$ if $E_{M_1^*}^{\varphi_1}=\emptyset$ and $E_{M_3^*}^{\varphi_3}=\{h,e_{bv},e_{yu}\}$ if $E_{M_1^*}^{\varphi_1}= \{h\} $.
Since $E_{M_3^*}^{\varphi_3}\subseteq (E_1(M_0^*,\varphi_0) \cup E_2(M_0^*,\varphi_0))$ is a matching, and those edges in $E_{M_3^*}^{\varphi_3}$ do not share any endvertex
with edges in $M_{\Delta+\mu}\cup M_3^*$,
it follows that  $M_{\Delta+\mu}\cup M_3^*\cup E_{M_3^*}^{\varphi_3}$ is a matching. Note that $V(H_2)\cap V(M\setminus\{f_{uv}\})=\emptyset$. For each  $e\in M_3^*$ such that $e$ is adjacent in $G$ to an edge of $E_2(M^*_3,\varphi_3)$, $e$ is still $k$-critical and fully $G$-saturated in the graph $H_e+e$, where $H_e$ is still the unique maximal $k$-dense subgraph of $G-(M\cup M^*_0)$ containing $V(e)$ and $H_e$ is also strongly $\varphi_3$-closed.  Thus the new triple  $(M^*_3,E_{M_3^*}^{\varphi_3},\varphi_3)$ is prefeasible.
Furthermore,
 $|E_1(M_3^*,\varphi_3)|  =  |E_1(M_1^*,\varphi_1)|-1\ge |E_1(M_0^*,\varphi_0)|-1$ and $|E_2(M_3^*,\varphi_3)|= |E_2(M_1^*,\varphi_1)|-1 =|E_2(M_0^*,\varphi_0)|-2$. Thus we can consider $(M^*_3, E_{M_3^*}^{\varphi_3},\varphi_3)$ instead.

	\begin{figure}[!ht]
		\begin{center}
			\centering
			\scalebox{0.32}{\includegraphics{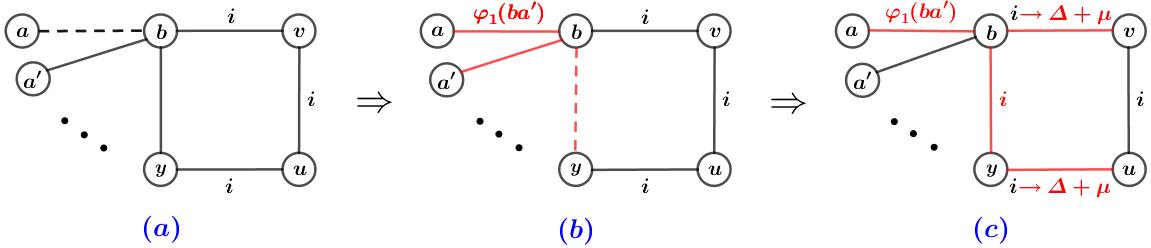}}
			\caption{Operation in Subcase $3.1$. (The edges of the dashed line represent uncolored edges.)}
		\end{center}
	\end{figure}

	{\bf Subcase 3.2:} $F_{b}$ contains a vertex $w''$ with $d_{H_1}(w'')=\Delta-1$ and $i\in\overline{(\varphi_1)}_{H_1}(w'')$.

The $i$-edge $e_{bv}$ is in $F_b$ by the maximality of $F_b$.
Let $S=(a,e_{ba'}, a',\ldots, e_{bw''}, w'', e_{bv},v)$  be a linear sequence  at $b$ from $a$ to $v$ in $F_b$ (where $a=a'$ and $a'=w''$   are possible).
Since $i\in\overline{(\varphi_1)}_{H_1}(w'')$, we have  that either
$i\in\overline{\varphi}_1(w'')$
or $w''$ is incident with an $i$-edge $h''\in\partial_{G-(M\cup M_1^*)}(H_1)$.

Assume first that $i\in\overline{\varphi}_1(w'')$ or $w''$ is incident with an $i$-edge $h''\in\partial_{G-(M\cup M_1^*)}(H_1)$ such that
$h''\in E_1(M_1^*,\varphi_1)$. We apply a shifting in $S$ from $a$ to $v$,
color  $e_{ab}$ with $\varphi_1(e_{ba'})$,  and uncolor  $e_{bv}$.
Note that  $e_{bw''}$ was recolored by the color $i$ in the shifting operation.
We then recolor   the  $i$-edge $h''$ with the color $\Delta+\mu$ if $h''$ exists,
and rename  some color classes of $H_2:=H_1+e_{ab}-e_{bv}$ but keep the color $i$ unchanged without producing any improper $i$-edge by Lemma \ref{lem-consisting}$(b)$. Finally we assign the color $\Delta+\mu$ to $e_{bv}$. Note that $h\neq h''$ since $\varphi_1(h)=\Delta+\mu\neq i=\varphi_1(h'')$, and $h$ and $h''$ cannot both exist in  $\partial_{G-(M\cup M_0^*)}(H)=\partial_{G-(M\cup M_1^*)}(H_1)$ since otherwise $\varphi_0(h)=\varphi_0(h'')=i$ contradicting that $H$ is strongly $\varphi_0$-closed.
Now we obtain a new matching $M^*_2:=M^*_1\backslash \{e_{ab}\}$ of $G-V(M)$ and a new (proper) $(k+1)$-edge-coloring $\varphi_2$ of $G-(M\cup M^*_2)$  such that $f_{uv}$ is no longer  T2-improper at $v$ or even  T1-improper at $v$ with respect to a new triple $(M^*_2,E_{M_2^*}^{\varphi_2},\varphi_2)$, where $E_{M_2^*}^{\varphi_2}= \{e_{bv}\}$  if $E_{M_1^*}^{\varphi_1}=\emptyset$ but $h''$ does not exist, $E_{M_2^*}^{\varphi_2}=\{e_{bv},h''\}$  if $E_{M_1^*}^{\varphi_1}=\emptyset$ and $h''$ exists, and $E_{M_2^*}^{\varphi_2}=\{e_{bv},h\}$  if $E_{M_1^*}^{\varphi_1}=\{h\}$.
Since $E_{M_2^*}^{\varphi_2}\subseteq (E_1(M_0^*,\varphi_0) \cup E_2(M_0^*,\varphi_0))$ is a matching, and those edges in $E_{M_2^*}^{\varphi_2}$ do not share any endvertex
with edges in $M_{\Delta+\mu}\cup M_2^*$,
it follows that $M_{\Delta+\mu}\cup M_2^*\cup E_{M_2^*}^{\varphi_2}$ is a matching. Note that $V(H_2)\cap V(M\setminus\{f_{uv}\})=\emptyset$. By the similar argument as in the proof of Subcase $3.1$,  the new triple  $(M^*_2,E_{M_2^*}^{\varphi_2},\varphi_2)$ is  prefeasible.
Furthermore, $|E_1(M_2^*,\varphi_2)|  \ge |E_1(M_0^*,\varphi_0)|$ and $|E_2(M_2^*,\varphi_2)|=|E_2(M_0^*,\varphi_0)|-2$. Thus we can consider $(M^*_2, E_{M_2^*}^{\varphi_2},\varphi_2)$ instead.

Now we may assume that the $i$-edge $h'' \not\in E_1(M_1^*,\varphi_1)$.
Since $h$ and $h''$ cannot both exist, we have $E_{M_1^*}^{\varphi_1}=\emptyset$.
Note that the vertex $w''\notin V(F_a)$ by Assumption $(2)$ prior to Subcase $3.1$. Moreover, $w''$ is not incident with any edge in $M\cup M^*_1$ and $w''$ is only incident with the $i$-edge $h''$ in $\partial_{G-(M\cup M_1^*)}(H_1)$. Since $d_{G-(M\cup M^*_1)}(w'')=\Delta$ and $\varphi_1$ is a $k$-edge-coloring of $G-(M\cup M^*_1)$ with $k\geq \Delta+1$, there exists a color $\alpha\in\overline{\varphi}_1(w'')$ with $\alpha\neq i$. Since $V(H_1)$ is $(\varphi_1)_{H_1}$-elementary, there exists an $\alpha$-edge $e_1$ incident with the vertex $a$. Thus we can define a maximal multi-fan at $a$, denoted by  $F'_a$,  with respect to $e_1$ and $(\varphi_1)_{H_1}$ in $H_1+e_1$.
(Notice that $e_1$ is colored by the color $\alpha$ in $F'_a$.) Moreover, $V(F'_a)$ is $(\varphi_1)_{H_1}$-elementary since $V(H_1)$ is $(\varphi_1)_{H_1}$-elementary. By Lemma \ref{MD}$(b)$ and Assumption $(1)$ prior to Subcase $3.1$, we have $e_{F_a}(a,b')=e_{H_1+e_{ab}}(a,b')=\mu$ for any vertex $b'$ in $V(F_a)\backslash\{a\}$. Therefore, $V(F'_a)\backslash\{a\}$ and $V(F_a)\backslash\{a\}$ are disjoint, since otherwise we have $V(F'_a)\subseteq V(F_a)$ and $\alpha\in \overline{(\varphi_1)}_{H_1}(b')$ for some $b'\in V(F_a)$ implying $b'=w''\in V(F_a)$, a contradiction. Note that if $w''\notin V(F'_a)$, then $V(F'_a)\backslash \{a\}$ must contain a $\Delta$-vertex in $H_1$, since otherwise Lemma \ref{MD}$(d)$ and the fact $(\varphi_1)_{H_1}(e_1)=\alpha\in\overline{\varphi}_1(w'')$ imply that $w''\in V(F'_a)$, a contradiction. Thus $F'_a$ contains a linear sequence $S'=(b_1,e_2,b_2,\ldots, e_t,b_t)$ at $a$, where $b_1\in V(e_1)$,  $b_t$ (with $t\ge 1$) is a $\Delta$-vertex if $w''\notin V(F'_a)$, and $b_t$ is $w''$ if $w''\in V(F'_a)$.
Notice that $b_t$ is not incident with any edge in $M\cup M^*_1$ by our choice of $b_t$.  Moreover, $b_t\neq y$ since $V(F'_a)\backslash\{a\}$ and $V(F_a)\backslash\{a\}$ are disjoint. Let $\beta$ ($\beta\neq i$) be a color in $\overline{\varphi}_1(b)$. By Lemma \ref{MF}$(b)$, we have $P_b(\beta,\alpha)=P_{w''}(\beta,\alpha)$. We then consider the following two subcases according the set $(V(S')\backslash\{a\})\cap (V(S)\backslash\{a\})$.

We first assume that $(V(S')\backslash\{a\})\cap (V(S)\backslash\{a\})\subseteq\{b_t\}$. If $e_1\notin P_b(\beta,\alpha)$, then we  apply  a Kempe change on $P_{[b,w'']}(\beta,\alpha)$, uncolor $e_1$ and color $e_{ab}$ with $\alpha$.
If  $e_1\in P_b(\beta,\alpha)$ and $P_b(\beta,\alpha)$ meets $b_1$ before $a$, then we  apply a Kempe change on $P_{[b,b_1]}(\beta,\alpha)$, uncolor $e_1$ and color $e_{ab}$ with $\alpha$. If  $e_1\in P_b(\beta,\alpha)$ and $P_{w''}(\beta,\alpha)$ meets $b_1$ before $a$, then we uncolor $e_1$, apply a Kempe change on $P_{[w'',b_1]}(\beta,\alpha)$,  apply a shifting in $S$ from $a$ to $w''$, color  $e_{ab}$ with $\varphi_1(e_{ba'})$, and recolor $e_{bw''}$ with $\beta$. In all three cases above,  $e_{ab}$ is colored with a color in $[k]$ and $e_1$ is uncolored. Finally we apply a shifting in $S'$ from $b_1$ to $b_t$, color $e_1$ with $\varphi_1(e_2)$, and uncolor $e_t$. Notice that the above shifting in $S'$ does nothing if $t=1$. Denote $H_2:=H_1+e_{ab}-e_t$. Since $H_2$ is also $k$-dense and $\chi'(H_2)=k$, we can rename some color classes of $E(H_2)$ but keep the color $i$ unchanged to match all colors on boundary edges without producing any improper $i$-edge by Lemma \ref{lem-consisting}$(b)$.
Now we obtain a new matching $M^*_2:=(M^*_1\backslash \{e_{ab}\})\cup \{e_t\}$  and
a new (proper) $k$-edge-coloring $\varphi_2$ of $G-(M\cup M^*_2)$ such that $f_{uv}$ is no longer T2-improper at $v$ but T1-improper at $v$ with respect to the new prefeasible triple $(M^*_2,\emptyset,\varphi_2)$.
Furthermore, $|E_1(M_2^*,\varphi_2)|=|E_1(M_0^*,\varphi_0)|+2$ and $|E_2(M_2^*,\varphi_2)|=|E_2(M_0^*,\varphi_0)|-2$.	Thus we can consider $(M^*_2,\emptyset,\varphi_2)$ instead.

	\begin{figure}[!ht]
		\begin{center}
			\centering
			\scalebox{0.34}{\includegraphics{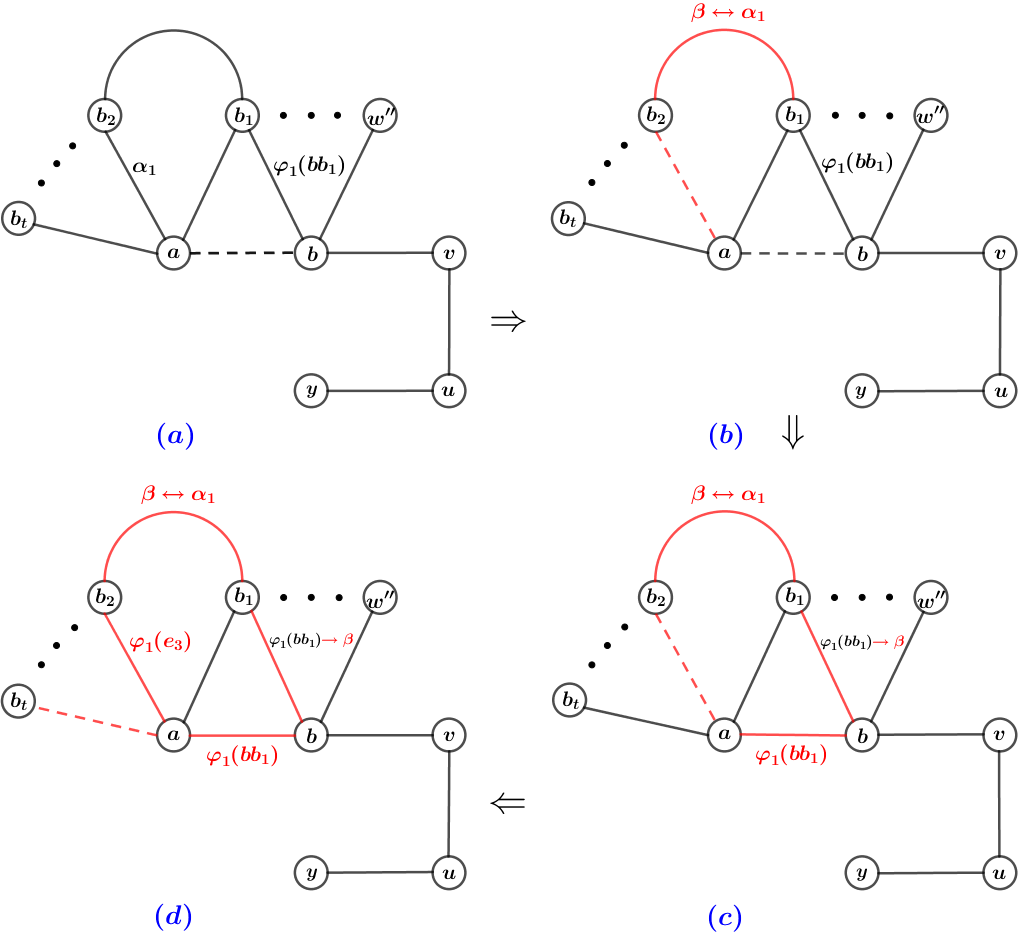}}
			\caption{One possible operation for $b_j=a^*\in(V(S')\backslash\{a\})\cap (V(S)\backslash\{a\})$ in Subcase 3.2, where $b_1=b_j=a^*=a'$. (The edges of the dashed line represent uncolored edges.)}
		\end{center}
	\end{figure}

Then we assume that there exists $b_j=a^*\in(V(S')\backslash\{a\})\cap (V(S)\backslash\{a\})$ for some $j\in [t-1]$ and $a^*\in V(S)$.
See Figure $3$ for a depiction when $b_1=b_j=a^*=a'$.
In this case we assume $a^*$ is the closest vertex to the vertex $a$ along $S$. Note that $b_j\neq b$ as $V(F'_a)\backslash\{a\}$ and $V(F_a)\backslash\{a\}$ are disjoint.  Let $\alpha_j=\varphi_1(e_{j+1})\in \overline{(\varphi_1)}_{H_1}(b_j)$. By Lemma \ref{MF}$(b)$, we have
$P_b(\beta,\alpha_j)=P_{b_j}(\beta,\alpha_j)$. If $e_{j+1}\notin P_b(\beta,\alpha_j)$, then we  apply a Kempe change on $P_{[b,b_{j}]}(\beta,\alpha_j)$, uncolor $e_{j+1}$ and color $e_{ab}$ with $\alpha_{j}$.
If  $e_{j+1}\in P_b(\beta,\alpha_{j})$ and $P_b(\beta,\alpha_{j})$ meets $b_{j+1}$ before $a$, then we apply a Kempe change on $P_{[b,b_{j+1}]}(\beta,\alpha_{j})$, uncolor $e_{j+1}$ and color $e_{ab}$ with $\alpha_{j}$. If  $e_{j+1}\in P_b(\beta,\alpha_{j})$ and $P_{b_{j}}(\beta,\alpha_{j})$ meets $b_{j+1}$ before $a$, then we uncolor $e_{j+1}$, apply a Kempe change on $P_{[b_i,b_{j+1}]}(\beta,\alpha_{j})$,  apply a shifting in $S$ from $a$ to $b_j$ (i.e., $a^*$), color  $e_{ab}$ with $\varphi_1(e_{ba'})$,  and recolor the edge $e_{bb_j}\in E_{H_1}(b,b_j)$ with $\beta$. (See Figure $3$(a)-(c).) In all three cases above, $e_{ab}$ is colored with a color in $[k]$ and $e_{j+1}$ is uncolored. Finally we apply a shifting  in $S'$  from $b_{j+1}$ to $b_t$, color $e_{j+1}$ with $\varphi_1(e_{j+2})$, and uncolor $e_t$. (See Figure $3$(d).) Notice that the above shifting in $S'$ does nothing if $b_{j+1}=b_t$. Denote $H_2:=H_1+e_{ab}-e_t$. Since $H_2$ is also $k$-dense and $\chi'(H_2)=k$, we can rename some color classes of $E(H_2)$ but keep the color $i$ unchanged to match all colors on boundary edges without producing any improper $i$-edge by Lemma \ref{lem-consisting}$(b)$.
Now we obtain a new matching $M^*_2:=(M^*_1\backslash \{e_{ab}\})\cup \{e_t\}$ of $G-V(M)$ and a new (proper) $k$-edge-coloring $\varphi_2$ of $G-(M\cup M^*_2)$ such that $f_{uv}$ is no longer T2-improper at $v$ but T1-improper  at $v$  with respect to the new prefeasible triple  $(M^*_2,\emptyset,\varphi_2)$. Furthermore, $|E_1(M_2^*,\varphi_2)|=|E_1(M_0^*,\varphi_0)|+2$ and $E_2(M_2^*,\varphi_2)=|E_2(M_0^*,\varphi_0)|-2$.	
Thus we can consider $(M^*_2,\emptyset,\varphi_2)$ instead.

	{\bf Subcase 3.3:}
$F_{b}$ does not contain a linear sequence at $b$ from $a$ to $y$ without $i$-edge, and $F_{b}$ does not contain a vertex $w''$ with $d_{H_1}(w'')=\Delta-1$ and $i\in\overline{(\varphi_1)}_{H_1}(w'')$.
	
We claim that $F_{b}$ contains a linear sequence $S^*$ at $b$ from $a$ to a $\Delta$-vertex $y^*$ such that $y^*\neq y$  and there is no $i$-edge in $S^*$. By Lemma \ref{MD}$(a)$, the multi-fan $F_b$ contains at least one $\Delta$-vertex in $H_1$.
Now if $F_{b}$ does not contain any linear sequence  without $i$-edges from $a$ to any $\Delta$-vertex in $H_1$, then by Lemma \ref{MD}$(c)$, the multi-fan $F_{b}$ contains a vertex $w''$ with $d_{H_1}(w'')=\Delta-1$ and $i\in\overline{(\varphi_1)}_{H_1}(w'')$, contradicting the condition of Subcase 3.3. So $F_{b}$ contains a linear sequence $S^*$ from $a$ to a vertex $y^*$ such that $d_{H_1}(y^*)=\Delta$ and there is no $i$-edge in $S^*$. Note that $y^*\neq y$, since otherwise we also have a contradiction to the condition of Subcase 3.3. Thus the claim is proved.
	
Assume that $S^*=(a,e_{ba'},a',\ldots, e_{by^*},y^*)$ at $b$ from $a$ to $y^*$ (where $a'=y^*$ is possible), and $S^*$ contains no $i$-edge. Let $\theta\in\overline{\varphi}_1(y^*)$.
	
	{\bf Subcase 3.3.1:} $\theta=i$.

Since $S^*$ contains no $i$-edge, we  apply a shifting  in $S^*$  from $a$ to $y^*$, color $e_{ab}$ with $\varphi_1(e_{ba'})$, uncolor $e_{by^*}$, and rename some color classes of $E(H_1+e_{ab}-e_{by^*})$ but keep the color $i$ unchanged to match all  colors on boundary edges without producing any improper $i$-edge by Lemma \ref{lem-consisting}$(b)$.  By coloring  $e_{by^*}$ with $i$ and recoloring  $e_{bv}$ from $i$ to $\Delta+\mu$,  we obtain a new matching $M^*_2:=M^*_1\backslash \{e_{ab}\}$ of $G-V(M)$  and a new (proper) $(k+1)$-edge-coloring $\varphi_2$ of $G-(M\cup M^*_2)$.  Then $f_{uv}$ is no longer  T2-improper  at $v$ or even T1-improper  at $v$ with respect to the new prefeasible triple $(M^*_2,E_{M_2^*}^{\varphi_2},\varphi_2)$ with  $E_{M_2^*}^{\varphi_2}= \{e_{bv}\}$  if $E_{M_1^*}^{\varphi_1}=\emptyset$, and $E_{M_2^*}^{\varphi_2}= \{e_{bv},h\}$  if $E_{M_1^*}^{\varphi_1}=\{h\}$ (when $y^*\in V(F_x)\cap V(F_b)$). Furthermore,
$E_{M_2^*}^{\varphi_2}\subseteq  (E_1(M_0^*,\varphi_0)\cup E_2(M_0^*,\varphi_0))$, $|E_1(M_2^*,\varphi_2)|\ge|E_1(M_0^*,\varphi_0)|$ and $|E_2(M_2^*,\varphi_2)|=|E_2(M_0^*,\varphi_0)|-2$.
Thus we can consider $(M^*_2,E_{M_2^*}^{\varphi_2},\varphi_2)$ instead.

	{\bf Subcase 3.3.2:} $\theta\neq i$.

Since $V(H_1)$ is $(\varphi_1)_{H_1}$-elementary, there exists a $\theta$-edge $e_1$ incident with the vertex $a$. Thus by the similar argument as in the proof of Subcase $3.2$, we  define a maximal multi-fan at $a$, denoted by  $F'_a$,  with respect to $e_1$ and $(\varphi_1)_{H_1}$ in $H_1+e_1$,
and we have $e_{F_a}(a,b')=e_{H_1+e_{ab}}(a,b')=\mu$ for any vertex $b'$ in $V(F_a)\backslash\{a\}$. Therefore, $V(F'_a)\backslash\{a\}$ and $V(F_a)\backslash\{a\}$ are disjoint, since otherwise we have $V(F'_a)\subseteq V(F_a)$ and $\varphi_1(e_1)=\theta\in \overline{(\varphi_1)}_{H_1}(b')$ for some $b'\in V(F_a)$ implying $y^*=b'\in V(F_a)$, which contradicts Assumption $(1)$. Note that $V(F'_a)\backslash \{a\}$ must contain a $\Delta$-vertex in $H_1$, since otherwise Lemma \ref{MD}$(d)$ and the fact $(\varphi_1)_{H_1}(e_1)=\theta\in\overline{\varphi}_1(y^*)$ imply that $y^*\in V(F'_a)$, which contradicts $d_{H_1}(y^*)=\Delta$.
If $F'_a$ contains a  vertex of $V(H_1)$ that is incident with an $i$-edge of $\partial_{G-(M\cup M_1^*)}(H_1)$ in $G-(M\cup M^*_1)$, then we denote the vertex by $w^*$ and the $i$-edge by $h^*$.
If $F'_a$ does not contain any linear sequence to a $\Delta$-vertex in $H_1$ without $i$-edge and boundary vertex $w^*$, then by Lemma \ref{MD}$(d)$, the multi-fan $F'_a$ contains a vertex $z^*$ with $i\in\overline{(\varphi_1)}_{H_1}(z^*)$ and $d_{H}(z^*)=\Delta-1$.  Since $H_1$ is $(\varphi_1)_{H_1}$-elementary, we have $z^*=w^*$ and $d_{H_1}(w^*)=\Delta-1$.
Thus $F'_a$ contains a linear sequence $S'=(b_1,e_2,b_2,\ldots, e_t,b_t)$  at $a$, where $b_1\in V(e_1)$, $b_t$ (with $t\ge 1$) is $w^*$ if there exists  $w^*$ with $d_{H_1}(w^*)=\Delta-1$ such that $h^*\in\partial_{G-(M\cup M_1^*)}(H_1)$ but $h^*\notin E_1(M_0^*,\varphi_0)$, and $b_t$ is a $\Delta$-vertex in $H_1$ otherwise. Notice that $b_t$ is not incident with any edge in $M\cup M^*_1$ by our choice of $b_t$.  Moreover, if $b_t=w^*$ as defined above, then $b_t=w^*$ is not a vertex in $V(F_b)$ by the condition of Subcase 3.3. And $b_t\neq y$ since $V(F'_a)\backslash\{a\}$ and $V(F_a)\backslash\{a\}$ are disjoint. Let $\beta$  ($\beta\neq i$) be a color in $\overline{\varphi}_1(b)$. By Lemma \ref{MF}$(b)$, we have $P_b(\beta,\theta)=P_{y^*}(\beta,\theta)$. We then consider the following two subcases according  the set $(V(S')\backslash\{a\})\cap (V(S^*)\backslash\{a\})$.

We first assume that $(V(S')\backslash\{a\})\cap (V(S^*)\backslash\{a\})\subseteq \{b_t\}$. If $e_1\notin P_b(\beta,\theta)$, then we  apply a Kempe change on $P_{[b,y^*]}(\beta,\theta)$, uncolor $e_1$ and color $e_{ab}$ with $\theta$.
If  $e_1\in P_b(\beta,\theta)$ and $P_b(\beta,\theta)$ meets $b_1$ before $a$, then we  apply a Kempe change on $P_{[b,b_1]}(\beta,\theta)$, uncolor $e_1$ and color $e_{ab}$ with $\theta$. If  $e_1\in P_b(\beta,\theta)$ and $P_{y^*}(\beta,\theta)$ meets $b_1$ before $a$, then we uncolor $e_1$, apply a Kempe change on $P_{[y^*,b_1]}(\beta,\theta)$, apply a shifting in $S^*$ from $a$ to $y^*$, color $e_{ab}$ with $\varphi_1(e_{ba'})$, and recolor $e_{by^*}$ with $\beta$. In all three cases above,  $e_{ab}$ is colored with a color in $[k]$ and $e_1$ is uncolored. Then we apply a shifting  in $S'$ from $b_1$ to $b_t$, color $e_1$ with $\varphi_1(e_2)$, and  uncolor $e_t$. Denote $H_2:=H_1+e_{ab}-e_t$. Since $H_2$ is also $k$-dense and $\chi'(H_2)=k$,  we can rename some color classes of $E(H_2)$ but keep the color $i$ unchanged to match colors on boundary edges except $i$-edges by Lemma \ref{lem-consisting}$(b)$. Finally recolor $h^*$ with the color $\Delta+\mu$ if $h^*\in\partial_{G-(M\cup M_0^*)}(H)\cap E_1(M_0^*,\varphi_0)$.
Now we obtain a new matching $M^*_2:=(M^*_1\backslash \{e_{ab}\})\cup \{e_t\}$ of $G-V(M)$  and
a new (proper) $(k+1)$-edge-coloring $\varphi_2$ of $G-(M\cup M^*_2)$ such that $f_{uv}$ is no longer T2-improper at $v$ but  T1-improper at $v$ with respect to the new prefeasible triple $(M^*_2,E_{M_2^*}^{\varphi_2},\varphi_2)$, where $\emptyset $ or $\{h\}$ or $\{h^*\}=E_{M_2^*}^{\varphi_2}\subseteq E_1(M_0^*,\varphi_0)$.
Furthermore,
$|E_1(M_2^*,\varphi_2)|\ge|E_1(M_0^*,\varphi_0)|$ and $|E_2(M_2^*,\varphi_2)|=|E_2(M_0^*,\varphi_0)|-2$. Thus we can consider $(M^*_2,E_{M_2^*}^{\varphi_2},\varphi_2)$ instead.
	
\begin{figure}[!ht]
		\begin{center}
			\centering
			\scalebox{0.34}{\includegraphics{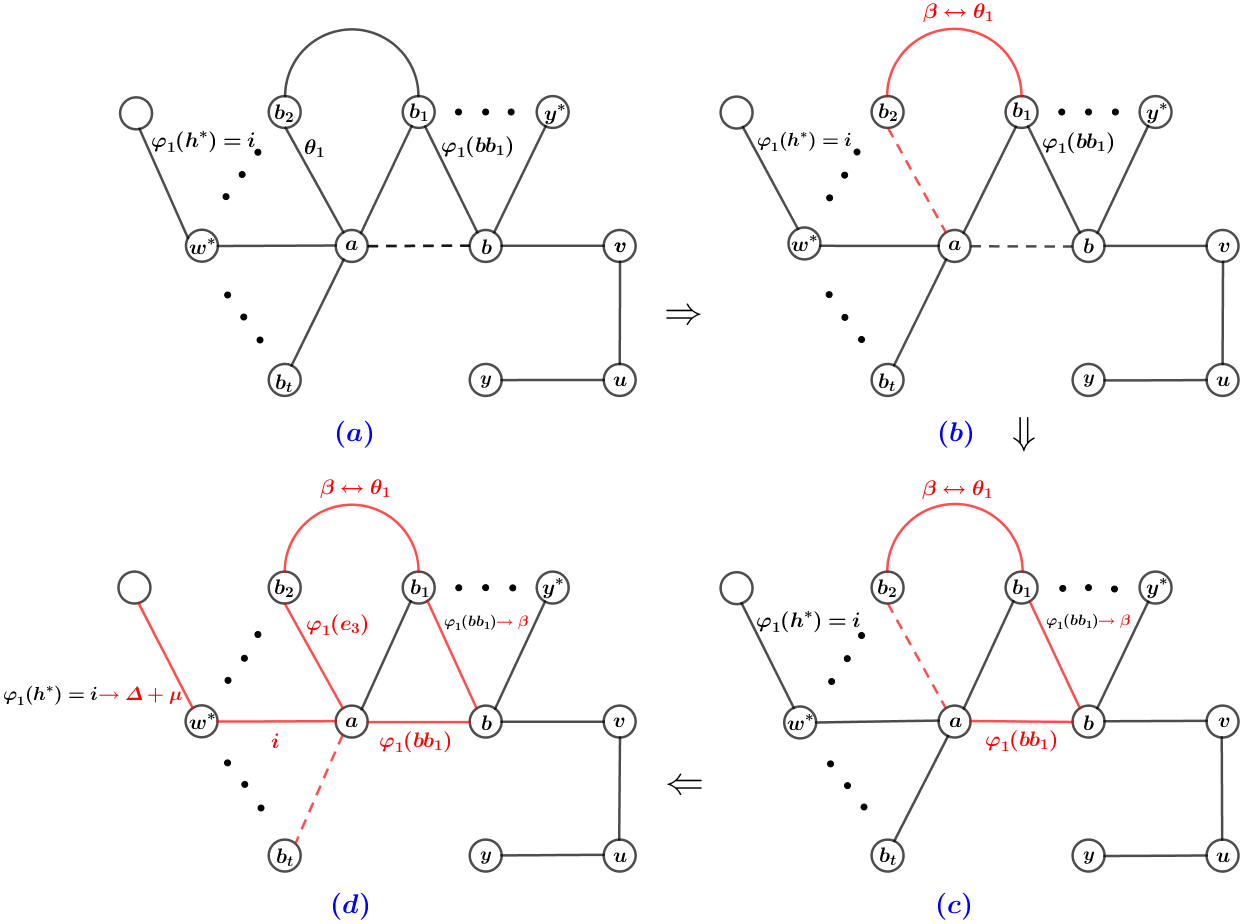}}
			\caption{One possible operation for $b_j=a^*\in(V(S')\backslash\{a\})\cap (V(S)\backslash\{a\})$ in Subcase 3.3, where $b_1=b_j=a^*=a'$. (The edges of the dashed line represent uncolored edges.)}
		\end{center}
	\end{figure}	
Then we assume that there exists $b_j=a^*\in(V(S')\backslash\{a\})\cap (V(S^*)\backslash\{a\})$ for some $j\in [t-1]$ and $a^*\in V(S^*)$.
See Figure $4$ for a depiction when $b_1=b_j=a^*=a'$.
In this case we assume $a^*$ is the closest vertex to $a$ along $S^*$. Note that $b_j\neq b$ as $V(F'_a)\backslash\{a\}$ and $V(F_a)\backslash\{a\}$ are disjoint.  Let $\theta_j=\varphi_1(e_{j+1})\in \overline{(\varphi_1)}_{H_1}(b_j)$. By Lemma \ref{MF}$(b)$, $P_b(\beta,\theta_j)=P_{b_j}(\beta,\theta_j)$. If $e_{j+1}\notin P_b(\beta,\theta_j)$, then we apply a Kempe change on $P_{[b,b_j]}(\beta,\theta_j)$, uncolor $e_{j+1}$ and color $e_{ab}$ with $\theta_{j}$.
	If  $e_{j+1}\in P_b(\beta,\theta_{j})$ and $P_b(\beta,\theta_{j})$ meets $b_{j+1}$ before $a$, then we apply a Kempe change on $P_{[b,b_{j+1}]}(\beta,\theta_{j})$, uncolor $e_{j+1}$ and color $e_{ab}$ with $\theta_{j}$. If  $e_{j+1}\in P_b(\beta,\theta_{j})$ and $P_{b_j}(\beta,\theta_{j})$ meets $b_{j+1}$ before $a$, then we uncolor $e_{j+1}$, apply a Kempe change on $P_{[b_j,b_{j+1}]}(\beta,\theta_{j})$,  apply a shifting in $S^*$ from $a$ to $b_j$ (i.e., $a^*$), color  $e_{ab}$ with $\varphi_1(e_{ba'})$,  and recolor the edge $e_{b{b_j}}\in E_{H_1}(b,b_j)$ with $\beta$. (See Figure $4$(a)-(c).) In all three cases above,  $e_{ab}$ is colored with a color in $[k]$ and $e_{j+1}$ is uncolored. Denote $H_2:=H_1+e_{ab}-e_t$. Then we apply a shifting in $S'$ from  $b_{j+1}$ to $b_t$, color $e_{j+1}$ with $\varphi_1(e_{j+2})$, and uncolor the edge $e_t$,  and rename some color classes of $E(H_2)$ but keep the color $i$ unchanged to match all colors on boundary edges  except $i$-edges by Lemma \ref{lem-consisting}$(b)$. Finally recolor $h^*$ with $\Delta+\mu$ if  $h^*\in\partial_{G-(M\cup M_0^*)}(H)\cap E_1(M_0^*,\varphi_0)$. (See Figure $4$(d).)
	Now we obtain a new matching $M^*_2=(M^*_1\backslash \{e_{ab}\})\cup \{e_t\}$ of $G-V(M)$ and a new (proper) $(k+1)$-edge-coloring $\varphi_2$ of $G-(M\cup M^*_2)$ such that $f_{uv}$ is no longer  T2-improper at $v$ but T1-improper at $v$  with respect to the new prefeasible triple $(M^*_2,E_{M_2^*}^{\varphi_2},\varphi_2)$, where $\emptyset $ or $\{h\}$ or $\{h^*\}=E_{M_2^*}^{\varphi_2}\subseteq E_1(M_0^*,\varphi_0)$. Furthermore,
$|E_1(M_2^*,\varphi_2)|\ge|E_1(M_0^*,\varphi_0)|$ and $|E_2(M_2^*,\varphi_2)|=|E_2(M_0^*,\varphi_0)|-2$.
Thus we can consider $(M^*_2,E_{M_2^*}^{\varphi_2},\varphi_2)$ instead. The proof is now finished.
\end{proof}

\end{document}